\newcommand{\algorithmfootnote}[2][\footnotesize]{%
  \let\old@algocf@finish\@algocf@finish
  \def\@algocf@finish{\old@algocf@finish
    \leavevmode\rlap{\begin{minipage}{\linewidth}
    #1#2
    \end{minipage}}%
  }%
}
\newtheorem{proposition}{Proposition}
\newtheorem{remark}{Remark}
\newenvironment{proof}{{\em Proof: }}{\hfill \hspace*{1pt}
\hfill $\blacksquare$}
\newcolumntype{C}[1]{>{\centering}m{#1}}
\def\BibTeX{{\rm B\kern-.05em{\sc i\kern-.025em b}\kern-.08em
    T\kern-.1667em\lower.7ex\hbox{E}\kern-.125emX}}
\newcommand{\tr}{{{\mathsf T}}}
\newcounter{tempEquationCounter}
\newcounter{thisEquationNumber}
\begin{document}

\title{Fast ADMM for sum-of-squares programs\\using partial orthogonality\textsuperscript{$\dagger$}}
\author{Yang Zheng, Giovanni~Fantuzzi, and Antonis Papachristodoulou, \IEEEmembership{Senior Member, IEEE}
\thanks{Y. Zheng, and A. Papachristodoulou are with Department of Engineering Science at the University of Oxford. (E-mail: \{yang.zheng, antonis\}@eng.ox.ac.uk). G. Fantuzzi is with Department of Aeronautics at Imperial College London (E-mail: gf910@ic.ac.uk.). Y. Zheng is supported by the Clarendon Scholarship and the Jason Hu Scholarship, G. Fantuzzi is supported by an EPSRC Doctoral Prize Fellowship, and A. Papachristodoulou is supported by EPSRC Grant EP/M002454/1.}
\thanks{$^\dagger$This document is an extended version of a homonymous article submitted to \textit{IEEE Trans. Autom. Control}.
}
}

\maketitle

\begin{abstract}
When sum-of-squares (SOS) programs are recast as semidefinite programs (SDPs) using the standard monomial basis, the constraint matrices in the SDP possess a structural property that we call \emph{partial orthogonality}. In this paper, we leverage partial orthogonality to develop a fast first-order method, based on the alternating direction method of multipliers (ADMM), for the solution of the homogeneous self-dual embedding of SDPs describing SOS programs. Precisely, we show how a ``diagonal plus low rank" structure implied by partial orthogonality can be exploited to project efficiently the iterates of a recent ADMM algorithm for generic conic programs onto the set defined by the affine constraints of the SDP. The resulting algorithm, implemented as a new package in the solver CDCS, is tested on a range of large-scale SOS programs arising from constrained polynomial optimization problems and from Lyapunov stability analysis of polynomial dynamical systems. These numerical experiments demonstrate the effectiveness of our approach compared to common state-of-the-art solvers.
\end{abstract}

\begin{IEEEkeywords}
    Sum-of-squares (SOS),
    ADMM,
    large-scale optimization.
\end{IEEEkeywords}

\section{Introduction} \label{Section:Introduction}

Optimizing the coefficients of a polynomial in $n$ variables, subject to a nonnegativity constraint on the entire space $\mathbb{R}^n$ or on a semialgebraic set $\mathcal{S} \subseteq \mathbb{R}^n$ ({\it i.e.}, a set defined by a finite number of polynomial equations and inequalities), is a fundamental problem in many fields. For instance, linear, quadratic and mixed-integer optimization problems can be recast as polynomial optimization problems (POPs) of the form~\cite{lasserre2009moments}
\begin{equation} \label{eq:POP}
\min_{x \in \mathcal{S}}\, p(x),
\end{equation}
where $p(x)$ is a multivariate polynomial and $\mathcal{S} \subseteq \mathbb{R}^n$ is a semialgebraic set. Problem~\eqref{eq:POP} is clearly equivalent to
\begin{equation} \label{eq:POPreformualtion}
    \begin{aligned}
        \max & \quad \gamma \\
        \text{s. t.} & \quad p(x)- \gamma \geq 0 \quad \forall x \in \mathcal{S},
    \end{aligned}
\end{equation}
so POPs of the form~\eqref{eq:POP} can be solved globally if a linear cost function can be optimized subject to polynomial nonnegativity constraints on a semialgebraic set.

Another important example is the construction of a Lyapunov function $V(x)$ to certify that an equilibrium point {$x^*$} of a dynamical system $\tfrac{{\rm d} x(t)}{{\rm d}t} = f(x(t))$ is locally stable. {Taking $x^*=0$ without loss of generality}, given a neighbourhood $\mathcal{D}$ of the origin, local
stability follows if {$V(0)=0$} and 
\begin{subequations}
\begin{align}
 \label{eq:FindLyapunov_a}
    \phantom{-}V(x) &> 0, \quad\forall x \in \mathcal{D} \setminus \{0\}, \\
\label{eq:FindLyapunov_b}
- f(x)^\tr \nabla V(x) &\geq 0, \quad\forall x \in \mathcal{D}.
\end{align}
\end{subequations}
Often, the vector field $f(x)$ is polynomial~\cite{anderson2015advances} and, if one restricts the search to polynomial Lyapunov functions $V(x)$, conditions~\eqref{eq:FindLyapunov_a}-\eqref{eq:FindLyapunov_b} amount to a feasibility problem over nonnegative polynomials.

Testing for nonnegativity, however, is NP-hard for polynomials of degree as low as four~\cite{parrilo2003semidefinite}. This difficulty is often resolved by requiring that the polynomials under consideration are a sum of squares (SOS) of polynomials of lower degree.
In fact, checking for the existence (or lack) of an SOS representation amounts to solving a semidefinite program (SDP)~\cite{parrilo2003semidefinite}. In particular, consider a polynomial of degree $2d$ in $n$ variables,
\begin{equation*}
p(x) = \sum_{\alpha \in \mathbb{N}^n, \mid \alpha \mid \leq 2d} p_{\alpha} x_1^{\alpha_1} \dots x_n^{\alpha_n}.
\end{equation*}
{The key observation in~\cite{parrilo2003semidefinite} 
is that an}
SOS representation of $p(x)$ exists if and only if there exists a positive semidefinite matrix $X$
such that
\begin{equation}
\label{e:pxEqualityIntro}
p(x) = v_d(x)^\tr Xv_d(x),
\end{equation}
where
\begin{equation}
\label{eq:CandidateMonomials}
v_d(x) =[ 1,x_1,x_2,\ldots,x_n,x_1^2,x_1x_2,\ldots,x_n^d ]^\tr
\end{equation}
is the vector of monomials of degree no larger than $d$. Upon equating coefficients on both sides of~\eqref{e:pxEqualityIntro}, testing if $p(x)$ is an SOS reduces to a feasibility SDP of the form
\begin{equation}
\label{eq:SDPSOS}
    \begin{aligned}
        \text{find}\quad &X \\
        \text{s. t.} \quad  & \langle B_{\alpha}, X \rangle = p_{\alpha},
        							\quad \alpha\in\mathbb{N}_{2d}^n,\\
            & X \succeq 0,
    \end{aligned}
\end{equation}
where $\mathbb{N}^n_{2d}$ is the set of $n$-dimensional multi-indices with length at most $2d$, $B_\alpha$ are known {symmetric} matrices indexed by such multi-indices (see Section~\ref{Section:Preliminaries} for more details), {and $\langle A,B\rangle = {\rm trace}(AB)$ is the standard Frobenius inner product of two symmetric matrices $A$ and $B$.}

Despite the tremendous impact of SOS techniques in the fields of polynomial optimization~\cite{lasserre2001global} and systems analysis~\cite{papachristodoulou2005tutorial}, the current poor scalability of second-order interior-point algorithms for semidefinite programming prevents the use of SOS methods to solve POPs with many variables, or to analyse dynamical systems with many states. The main issue is that, when the full monomial basis~\eqref{eq:CandidateMonomials} is used, the linear dimension of the matrix $X$ and the number of constraints in~\eqref{eq:SDPSOS} are $N  = { n+d \choose d} $  and $m  = {n+2d \choose 2d}$, respectively, both of which grow quickly as a function of $n$ and $d$.

One strategy to mitigate the computational cost of optimization problems with SOS constraints (hereafter called {\it SOS programs}) is to replace the SDP obtained from the basic formulation outlined above with one that is less expensive to solve using second-order interior-point algorithms.
{Facial reduction techniques~\cite{permenter2014basis}, including the Newton polytope~\cite{reznick1978extremal} and diagonal inconsistency~\cite{lofberg2009pre}, and symmetry reduction strategies~\cite{gatermann2004symmetry} can be utilised to eliminate unnecessary monomials in the basis $v_d(x)$, thereby reducing the size of the positive semidefinite (PSD) matrix variable $X$.} Correlative sparsity~\cite{waki2006sums} can also be exploited to construct sparse SOS representations, wherein a polynomial $p(x)$ is written as a sum of SOS polynomials, each of which  depends only on a subset of the entries of $x$. This enables one to replace the large PSD matrix variable $X$ with a set of smaller PSD matrices, which can be handled more efficiently.  Further computational gains are available if one replaces any PSD constraints---either the original condition $X\succeq 0$ in~\eqref{eq:SDPSOS} or the PSD constraints obtained after applying the aforemention techniques---with the stronger constraints the PSD matrices are diagonally or {scaled-diagonally dominant~\cite{ahmadi2017dsos}.}
These conditions can be imposed with linear and second-order cone programming, respectively, and are therefore less computationally expensive. However, while the conservativeness introduced by the requirement of diagonal dominance can be reduced with a basis pursuit algorithm~\cite{ahmadi2015sum}, it cannot generally be removed.

Another strategy to enable the solution of large SOS programs is to replace the computationally demanding interior-point algorithms with first-order methods, at the expense of reducing the accuracy of the solution.  The design of efficient first-order algorithms for large-scale SDPs has recently received increasing attention:
Wen \emph{et al.} proposed an alternating-direction augmented-Lagrangian method for large-scale dual SDPs~\cite{wen2010alternating}; O'Donoghue \emph{et al.} developed an operator-splitting method to solve the homogeneous self-dual embedding of conic programs~\cite{ODonoghue2016}, which has recently been extended by the authors to exploit aggregate sparsity via chordal decomposition~\cite{ZFPGW2016,zheng2016fast,ZFPGW2017chordal}. Algorithms that specialize in SDPs from SOS programming exist~\cite{henrion2012projection,bertsimas2013accelerated}, but can be applied only to unconstrained POPs---not to constrained POPs of the form~\eqref{eq:POPreformualtion}, nor to the Lyapunov conditions~\eqref{eq:FindLyapunov_a}-\eqref{eq:FindLyapunov_b}. First-order regularization methods have also been applied to large-scale constrained POPs, but without taking into account any problem structure~\cite{nie2012regularization}. Finally, {the sparsity of the matrices $B_\alpha$ in~\eqref{eq:SDPSOS} was exploited in~\cite{Zheng2017Exploiting} to design an operator-splitting algorithm that can solve general large-scale SOS programs, but fails to detect infeasibility (however, recent developments~\cite{BGSBfeasibility2017,liu2017new} may offer a solution for this issue).

One major shortcoming of all but the last of these recent approaches is that they can only be applied to particular classes of SOS programs. For this reason, in this paper we develop a fast first-order algorithm, based on the alternating-direction method of multipliers, for the solution of generic large-scale SOS programs. Our algorithm exploits a particular structural property of SOS programs and can also detect infeasibility. Specifically, our contributions are:

\begin{enumerate}
  \item 
  We highlight a structural property of SDPs derived from SOS programs using the standard monomial basis: the equality constraints are {\it partially orthogonal}. Notably, the SDPs formulated by {common SOS {modeling toolboxes}~\cite{papachristodoulou2013sostools,henrion2003gloptipoly,lofberg2004yalmip} possess this property.}

  \item 
  We show how partial orthogonality leads to a ``diagonal plus low rank'' matrix structure in the ADMM algorithm of~\cite{ODonoghue2016}, so the matrix inversion lemma can be applied to reduce its computational cost. Precisely, a system of $m \times m$ linear equations to be solved at each iteration can be replaced with a $t\times t$ system, often with $t \ll m$.

  \item 
  {We demonstrate the efficiency of our method---available as a new package in the MATLAB solver CDCS~\cite{CDCS}---
  	compared to many common interior-point solvers
  	(SeDuMi~\cite{sturm1999using}, SDPT3~\cite{toh1999sdpt3}, SDPA~\cite{yamashita2003implementation}, CSDP~\cite{borchers1999csdp}, Mosek~\cite{mosek2010mosek})
  	and to the first-order solver SCS~\cite{scs}. Our results on large-scale SOS programs
  	from constrained POPs and Lyapunov stability analysis of nonlinear polynomial systems suggest that the proposed algorithm will enlarge the scale of practical problems that can be handled via SOS techniques.}
\end{enumerate}

The rest of this work is organized as follows. Section~\ref{Section:Preliminaries} briefly reviews SOS programs and their reduction to SDPs. Section~\ref{Section:PartialOrthogonality} {discusses} partial orthogonality in the equality constraints of SDPs arising from SOS programs, while Section~\ref{Section:ADMM} shows how to exploit {it} to facilitate the solution of large-scale SDPs using ADMM. {Sections~\ref{Section:MatrixSOS} and~\ref{Section:weighted-SOS} extend our results to matrix-valued SOS programs and weighted SOS constraints.} Numerical experiments are presented in Section~\ref{Section:Numerical}, and Section~\ref{Section:Conclusion} concludes the paper.

\section{Preliminaries}
\label{Section:Preliminaries}

\subsection{Notation}

The sets of nonnegative integers and real numbers are, respectively, $\mathbb{N}$ and $\mathbb{R}$.
For $x \in \mathbb{R}^n$ and $\alpha \in \mathbb{N}^n$, the monomial $x^{\alpha} = x_1^{\alpha_1}x_2^{\alpha_2}\cdots x_n^{\alpha_n}$ has degree $\vert \alpha \vert := \sum_{i=1}^n \alpha_i$. Given $d \in \mathbb{N}$, we let $\mathbb{N}_d^n=\{\alpha \in \mathbb{N}^n : \vert \alpha\vert \leq d\}$ and $\mathbb{R}[x]_{n,2d}$ be the set of polynomials in $n$ variables with real coefficients of degree $2d$ or less.
A polynomial $p(x) \in \mathbb{R}[x]_{n,2d}$ is a sum-of-squares (SOS) if
$p(x) = \sum_{i=1}^q [ f_i(x) ]^2$,
for some polynomials $f_i \in \mathbb{R}[x]_{n,d}$, $i = 1, \ldots, q$. We denote by $\Sigma[x]_{n,2d}$ the set of SOS polynomials in $\mathbb{R}[x]_{n,2d}$. 
Finally, $\mathbb{S}^n_+$ is the cone of $n\times n$ PSD matrices and $I_{r\times r}$ is the $r \times r$ identity matrix.

\subsection{General SOS programs}
\label{Section:GeneralSOS}

{Consider a vector of optimization variables $u \in \mathbb{R}^t$, a cost vector $w \in \mathbb{R}^t$, and note that any polynomial $p_j(x)\in \mathbb{R}[x]_{n,2d_j}$ whose coefficients depend affinely on $u$ can be written as
$p_j(x) = g^j_0(x) - \sum_{i=1}^t u_ig^j_i(x)$
for a suitable choice of polynomials or monomials $g^j_0,\,\ldots,\,g^j_t \in \mathbb{R}[x]_{n,2d_j}$.
We consider SOS programs written in the standard form}
\begin{equation}
\label{eq:generalSOS-multipleSOS}
\begin{aligned}
\min_{u,\,s_1,\ldots,s_k}\;\; & w^\tr u \\[-1ex]
\text{s. t.} \quad  & s_j(x) = g^j_0(x) - \sum_{i=1}^t u_ig^j_i(x) \; \forall j = 1,\,\ldots,\,k,\\
& s_j \in \Sigma[x]_{n,2d_j}, \quad j=1 ,\,\ldots,\,k.
\end{aligned}
\end{equation}
{Note that any linear optimization problem with polynomial nonnegativity constraints on fixed semialgebraic sets can be relaxed into an SOS program of the form~\eqref{eq:generalSOS-multipleSOS}.}
For instance, when $\mathcal{S} \equiv \mathbb{R}^n$ problem~\eqref{eq:POPreformualtion} can be relaxed as~\cite{parrilo2003semidefinite}
\begin{equation}
\label{eq:POPsdp}
    \begin{aligned}
        \min_{\gamma,s}\quad & -\gamma \\
        \text{s. t.} \quad  & s(x) = p(x) - \gamma, \\
        						 & s \in\Sigma[x]_{n,2d}. 
    \end{aligned}
\end{equation}
Similarly, the global stability of the origin for a polynomial dynamical system such that $f(0)=0$ may be established by looking for a polynomial Lyapunov function of the form
{$V(x)=-\sum_{i=1}^tu_i g_i(x)$, where $g_1(0)=\cdots=g_t(0)=0$.}
{With $\mathcal{D}\equiv\mathbb{R}^n$, and after subtracting $x^\tr x$ from the left-hand side of~\eqref{eq:FindLyapunov_a} to ensure strict positivity for $x\neq 0$~\cite{papachristodoulou2005tutorial}, suitable values $u_i$ can be found via the SOS feasibility program}
{
\begin{equation}
\label{eq:Lyapunovsdp}
\begin{aligned}
\text{find }\,  & u,\,s_1,\,s_2 \\
\!\text{s.t. }\,
& s_1(x) = - x^\tr x - \sum_{i=1}^t u_i g_i(x),
\\[-1.5ex]
& s_2(x) = \sum_{i=1}^t u_i  f(x)^\tr\nabla g_i(x),
\\
&  s_1,\,s_2 \in\Sigma[x]_{n,2d}.
\end{aligned}
\end{equation}
}

Sum-of-squares programs arising from polynomial nonnegativity constraints over fixed semialgebraic sets, such as Lasserre's relaxations of constrained POPs~\cite{lasserre2001global} and SOS relaxations of local Lyapunov inequalities~\cite{papachristodoulou2002construction,anderson2015advances}, can also be recast as in~\eqref{eq:generalSOS-multipleSOS} by adding extra polynomials to represent the SOS multipliers introduced after applying {\it Positivstellensatz}~\cite{anderson2015advances}. For example, consider the constrained POP
\begin{equation}\label{E:constrainedPOP}
\begin{aligned}
\min_{x} \quad & p_0(x) \\
\text{s. t.} \quad & p_1(x) \geq 0,\, \ldots,\, p_k(x) \geq 0,
\end{aligned}
\end{equation}
where $p_0,\,\ldots,\,p_k$ are fixed polynomials of degree no greater than $\omega$. The Lasserre relaxation of order $2d \geq \omega$ for~\eqref{E:constrainedPOP} is (see, for example, Chapter 5.3 in~\cite{lasserre2009moments})
\begin{equation}\label{E:constrainedPOPsos}
\begin{aligned}
\min \quad & -\gamma \\
\text{s. t.} \quad & p_0(x) - \gamma = s_0(x) + \sum_{i=1}^k r_i(x) p_i(x), \\
& s_0  \in \Sigma[x]_{n,2d}, \\
& r_j \in \Sigma[x]_{n,2d_j}, \quad j = 1, \ldots, k,
\end{aligned}
\end{equation}
%
where $d_j =  \lfloor d-\omega_j/2\rfloor, j=1,\,\ldots,\,k$ and $\omega_j$ is the degree of $p_j(x)$.
Upon introducing extra polynomials $s_1,\,\ldots,\,s_k$ we can consider the equivalent problem
\begin{equation}
\begin{aligned}
\min \quad & -\gamma \\
\text{s. t.} \quad & s_0(x) = p_0(x) - \gamma - \sum_{i=1}^k r_i(x) p_i(x), \\
& s_j(x) = r_j(x), \quad j = 1,\,\ldots,\,k,\\
& s_0\in \Sigma[x]_{n,2d}, \\
& s_j \in \Sigma[x]_{n,2d_j}, \quad j = 1, \ldots,k.
\end{aligned}
\end{equation}
This can be written in the form~\eqref{eq:generalSOS-multipleSOS} for a suitable set of polynomials $\{g^j_i\}$ if the optimization vector $u$ lists the scalar $\gamma$ and the coefficients of the tunable polyomials $r_1,\,\ldots,\,r_k$. A similar argument holds for linear optimization problems with polynomial inequalities on semialgebraic domains, such as the feasibility problems arising from local Lyapunov stability analysis. 

Of course, while the introduction of extra polynomials allows one to reformulate problem~\eqref{E:constrainedPOPsos} in the framework given by~\eqref{eq:generalSOS-multipleSOS}, it is undesirable in practice because it increases the number of optimization variables. In Section~\ref{Section:weighted-SOS} we show how problems with weighted SOS constraints such as~\eqref{E:constrainedPOPsos} can be handled directly with no need for extra optimization variables. Before that, however, we consider the standard form~\eqref{eq:generalSOS-multipleSOS} as a general framework for SOS programming. To simplify the exposition, instead of~\eqref{eq:generalSOS-multipleSOS}, we will consider the basic SOS program
\begin{equation}
\label{eq:generalSOS}
    \begin{aligned}
        \min_{u,\,s}\quad & w^\tr u \\[-1.5ex]
        \text{s. t.} \quad  & s(x) = g_0(x) - \sum_{i=1}^t u_ig_i(x), \\
        						 & s \in \Sigma[x]_{n,2d}.
    \end{aligned}
\end{equation}
All of our results from Sections~\ref{Section:PartialOrthogonality} and~\ref{Section:ADMM} extend to~\eqref{eq:generalSOS-multipleSOS} when $k>1$, because each of $s_1,\,\ldots,\,s_k$ enters one and only one equality constraint, as well as to more general SOS programs with additional linear equality, inequality, or conic constraints on $u$.

\subsection{SDP formulation}

The SOS program~\eqref{eq:generalSOS} can be converted into an SDP upon fixing a basis to represent the SOS polynomial variables. The simplest and most common choice to represent a degree-$2d$ SOS polynomial is the basis $v_d(x)$ of monomials of degree no greater than $d$, defined in~\eqref{eq:CandidateMonomials}. {As discussed in~\cite{parrilo2003semidefinite} and~\cite{powers1998algorithm}}, the polynomial $s(x)$  in~\eqref{eq:generalSOS} is SOS if and only if
\begin{equation} \label{eq:SOSbasicForm}
  s(x) = v_d(x)^\tr Xv_d(x) = \left\langle X, v_d(x)v_d(x)^\tr  \right\rangle,
  \,\,X \succeq 0.
\end{equation}
Let $B_{\alpha}$ be the $0/1$ indicator matrix for the monomial $x^{\alpha}$ in the outer product matrix $v_d(x)v_d(x)^\tr $, \emph{i.e.},
\begin{equation} \label{eq:coeff}
    (B_\alpha)_{\beta,\gamma} =
    \begin{cases}
    		1 &\text{if } \beta + \gamma = \alpha\\
    		0 &\text{otherwise},
    	\end{cases}
\end{equation}
where the natural ordering of multi-indices $\beta, \gamma \in \mathbb{N}^n_d$ is used to index the entries of $B_{\alpha}$. Then,
\begin{equation} \label{E:MatrixBasis}
    v_d(x)v_d(x)^\tr  = \sum_{\alpha \in \mathbb{N}^n_{2d}} B_{\alpha} x^{\alpha}.
\end{equation}
Upon writing
$g_i(x) = \sum_{\alpha \in \mathbb{N}^n_{2d}} g_{i,\alpha} x^{\alpha}$
for each $i = 0, 1, \ldots, t$, and representing $s(x)$ as in~\eqref{eq:SOSbasicForm}, the equality constraint in~\eqref{eq:generalSOS} becomes
\begin{align}
    \sum_{\alpha \in \mathbb{N}^n_{2d}}
    		\left(g_{0,\alpha} - \sum_{i=1}^t u_i g_{i,\alpha} \right) x^{\alpha}
    & = \left\langle X, v_d(x)v_d(x)^\tr  \right\rangle
    \nonumber \\
    & = \sum_{\alpha \in \mathbb{N}^n_{2d}} \left\langle B_{\alpha}, X \right\rangle x^{\alpha}.
\end{align}

Matching the coefficients on both sides yields
\begin{equation}\label{eq:Coefficient}
  g_{0,\alpha} - \sum_{i=1}^t u_i g_{i,\alpha} = \langle B_{\alpha}, X \rangle,
  \quad \forall \alpha \in \mathbb{N}^n_{2d}.
\end{equation}
We refer to~\eqref{eq:Coefficient} as the \emph{coefficient matching conditions}~\cite{Zheng2017Exploiting}.
The SOS program~\eqref{eq:generalSOS} is then equivalent to the SDP
\begin{equation}
\label{eq:generalSOSSDP}
    \begin{aligned}
        \min_{u}\;\, & w^\tr u \\[-1ex]
        \text{s. t.} \;\, &\langle B_{\alpha}, X \rangle +
        \sum_{i=1}^t u_i g_{i,\alpha} =  g_{0,\alpha} \;\,  \forall \alpha \in \mathbb{N}^n_{2d},  \\
        						 & X \succeq 0.
    \end{aligned}
\end{equation}
As already mentioned in Section~\ref{Section:Introduction}, when the full monomial basis $v_d(x)$ is used to formulate the SDP~\eqref{eq:generalSOSSDP}, the size of  $X$ and the number of constraints are, respectively,
$N = {n+d \choose d}$ and $m  = { n+2d \choose 2d}$. The size of SDP~\eqref{eq:generalSOSSDP} may be reduced (often significantly) by eliminating redundant monomials in $v_d(x)$ based on the structure of the polynomials $g_0(x),\,\ldots,\,g_t(x)$}; the interested reader is referred to Refs.~\cite{reznick1978extremal,lofberg2009pre,gatermann2004symmetry,permenter2014basis}.

\section{Partial orthogonality in SOS programs}
\label{Section:PartialOrthogonality}

For simplicity, we re-index the coefficient matching conditions~\eqref{eq:Coefficient} using integers $i = 1, \ldots, m$ instead of the multi-indices $\alpha$. Let ${\rm vec}: \mathbb{S}^N \to \mathbb{R}^{N^2}$ map a matrix to the stack of its columns and define
$A_1\in\mathbb{R}^{m \times t}$ and $A_2\in \mathbb{R}^{m \times N^2}$ as
\begin{align}
\label{E:VecMatrixDef}
    A_1 &:=  \begin{bmatrix} g_{1,1} & \cdots & g_{t,1} \\
                                \vdots & \ddots & \vdots \\
                                g_{1,m} & \cdots & g_{t,m} \end{bmatrix} ,
	&
    A_2 &:= \begin{bmatrix}
    		{\rm vec}(B_1)^\tr  \\ \vdots \\ {\rm vec}(B_m)^\tr  \end{bmatrix}.
\end{align}
In other words, $A_1$ collects the coefficients of polynomials $g_i(x)$ column-wise, and $A_2$ lists the vectorized {matrices $B_\alpha$ (after re-indexing) in a row-wise fashion}.
Finally, let $\mathcal{S}_+$ be the vectorized positive semidefinite cone, such that ${\rm vec}(X)\in \mathcal{S}_+$ if and only if $X\succeq 0$, and define
\begin{subequations}
\begin{align}
A &:= \left[ A_1,\; A_2\right] \in \mathbb{R}^{m\times (t+N^2)},\\
b &:= \left[ g_{0,1},\,\ldots,\,g_{0,m}\right]^\tr \in \mathbb{R}^m, \label{e:b-vector}\\
c &:= \left[ w^\tr ,\, 0,\,\ldots,\,0 \right]^\tr  \in \mathbb{R}^{t+N^2},\\
\xi &:= \left[ u^\tr , \, {\rm vec}(X)^\tr  \right]^\tr  \in \mathbb{R}^{t+N^2},\\
\mathcal{K} &:= \mathbb{R}^t \times \mathcal{S}_+\,.
\end{align}
\end{subequations}
%
{Then, noticing from the definition of the trace inner product of matrices that $\langle B_m, X \rangle={\rm vec}(B_m)^\tr {\rm vec}(X)$,} we can rewrite~\eqref{eq:generalSOSSDP} as the primal-form conic {program}
\begin{equation}
\label{eq:generalSDP}
    \begin{aligned}
        \min_{\xi}\quad & c^\tr \xi \\
        \text{s. t.} \quad  & A\xi = b,  \\
        						 & \xi \in \mathcal{K}.
    \end{aligned}
\end{equation}

The key observation at this stage is that the rows of the constraint matrix $A$ are {\it partially orthogonal}. {We show this next, assuming without loss of generality that $t < m$; in fact, very often $t \ll m$ in practice (cf. Tables~\ref{T:TimePOP} and~\ref{T:TimeLyapunov} in Section~\ref{Section:Numerical}).}
\begin{proposition}
\label{theo:orthogonality}
    Let $A=[A_1,\;A_2]$ be the constraint matrix in the conic formulation~\eqref{E:VecMatrixDef} of a SOS program modeled using the monomial basis. The $m \times m$ matrix $AA^\tr $ is of the ``diagonal plus low rank'' form. Precisely, $D:=A_2A_2^\tr $ is diagonal and
$AA^\tr  = D+A_1A_1^\tr $.
\end{proposition}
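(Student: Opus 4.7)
The plan is to split $AA^\tr$ using the block structure of $A=[A_1,A_2]$ and then analyze each piece separately. By block matrix multiplication, we immediately get
\[
AA^\tr = A_1A_1^\tr + A_2A_2^\tr,
\]
so the claim reduces to two observations: that $D:=A_2A_2^\tr$ is diagonal, and that $A_1A_1^\tr$ is low-rank (of rank at most $t\le m$, which is the "low rank" content).

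The main work is showing $D$ is diagonal. Since row $\alpha$ of $A_2$ is ${\rm vec}(B_\alpha)^\tr$, I would compute an arbitrary entry of $D$ as a Frobenius inner product:
\[
D_{\alpha,\alpha'} = {\rm vec}(B_\alpha)^\tr {\rm vec}(B_{\alpha'}) = \langle B_\alpha, B_{\alpha'}\rangle = \sum_{\beta,\gamma\in\mathbb{N}_d^n}(B_\alpha)_{\beta,\gamma}(B_{\alpha'})_{\beta,\gamma}.
\]
Using the indicator definition~\eqref{eq:coeff}, the summand is $1$ only when simultaneously $\beta+\gamma=\alpha$ and $\beta+\gamma=\alpha'$, forcing $\alpha=\alpha'$. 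Hence $D_{\alpha,\alpha'}=0$ whenever $\alpha\neq\alpha'$, proving diagonality. As a byproduct, the $\alpha$th diagonal entry equals the cardinality $|\{(\beta,\gamma)\in\mathbb{N}_d^n\times\mathbb{N}_d^n:\beta+\gamma=\alpha\}|$, which is the number of ways to write the monomial $x^\alpha$ as a product of two monomials from the basis $v_d(x)$; this count is strictly positive for every $\alpha\in\mathbb{N}_{2d}^n$, so $D$ is in fact positive diagonal.

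There is no real obstacle here: once one recognizes that the $B_\alpha$ are disjointly supported $0/1$ indicators (each entry position $(\beta,\gamma)$ belongs to exactly one $B_\alpha$, namely the one with $\alpha=\beta+\gamma$), the orthogonality $\langle B_\alpha,B_{\alpha'}\rangle=0$ for $\alpha\ne\alpha'$ is forced, and the proposition follows. The only thing worth being explicit about is that the $A_1A_1^\tr$ summand contributes rank at most $t$, which combined with the diagonal term $D$ is exactly the "diagonal plus low rank" structure that Section~\ref{Section:ADMM} will exploit via the matrix inversion lemma.
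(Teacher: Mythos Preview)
Your proof is correct and follows essentially the same approach as the paper: both split $AA^\tr=A_1A_1^\tr+A_2A_2^\tr$ by block multiplication and then argue that the $B_\alpha$ are disjointly supported $0/1$ matrices, so ${\rm vec}(B_\alpha)^\tr{\rm vec}(B_{\alpha'})=0$ for $\alpha\neq\alpha'$ and equals the number of nonzero entries of $B_\alpha$ when $\alpha=\alpha'$. Your write-up is slightly more explicit (spelling out the sum over $(\beta,\gamma)$ and the low-rank bound ${\rm rank}(A_1A_1^\tr)\leq t$), but the underlying argument is identical.
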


\begin{proof}
The definition of $A$ implies $AA^\tr  = A_1A_1^\tr  + A_2A_2^\tr $, so we need to show that $A_2A_2^\tr $ is diagonal. This follows from the definition~\eqref{eq:coeff} of the matrices $B_\alpha$: if an entry of $B_\alpha$ is nonzero, the same entry in $B_\beta$, $\alpha\neq \beta$, must be zero. Upon re-indexing the matrices using integers $i=1,\,\ldots,\,m$ as explained above and letting $n_i$ be the number of nonzero entries in $B_i$, it is clear that
$ {\rm vec}(B_i)^\tr  {\rm vec}(B_j) = n_{i}$ if $i = j$, and zero otherwise. Thus, $A_2 A_2^\tr  = {\rm diag}(n_1,\,\ldots,\,n_m)$.
\end{proof}

In essence, Proposition~\ref{theo:orthogonality} states that the constraint sub-matrices corresponding to the matrix $X$ in the SOS decomposition~\eqref{eq:SOSbasicForm} are orthogonal. This fact is a basic {structural} property for {\it any} SOS program formulated using the usual monomial basis. It is not difficult to check that Proposition~\ref{theo:orthogonality} also holds when the full monomial basis $v_d(x)$ is reduced using any of the techniques implemented in any of the modeling toolboxes~\cite{papachristodoulou2013sostools,henrion2003gloptipoly,lofberg2004yalmip}. 

\begin{remark}
    In general, the product $A_1A_1^\tr $ has no particular structure, and $AA^\tr $ is not diagonal except for very special problem classes. For example, Figure~\ref{F:SP_Demo2} illustrates the sparsity pattern of $AA^\tr $, $A_1A_1^\tr $, and $A_2A_2^\tr $ for {\small \texttt{sosdemo2}} in SOSTOOLS~\cite{papachristodoulou2013sostools}, an SOS formulation of a Lyapunov function search: $A_2A_2^\tr $ is diagonal, but $A_1A_1^\tr $ and $AA^\tr $ are not. This makes the algorithms proposed in~\cite{bertsimas2013accelerated,henrion2012projection} inapplicable, as they require that $AA^\tr$ is diagonal.
\end{remark}

\begin{figure}
    \centering
    \setlength{\abovecaptionskip}{0pt}
    \setlength{\belowcaptionskip}{0em}
    \subfigure[]
    {
		\includegraphics[width=0.28\columnwidth]{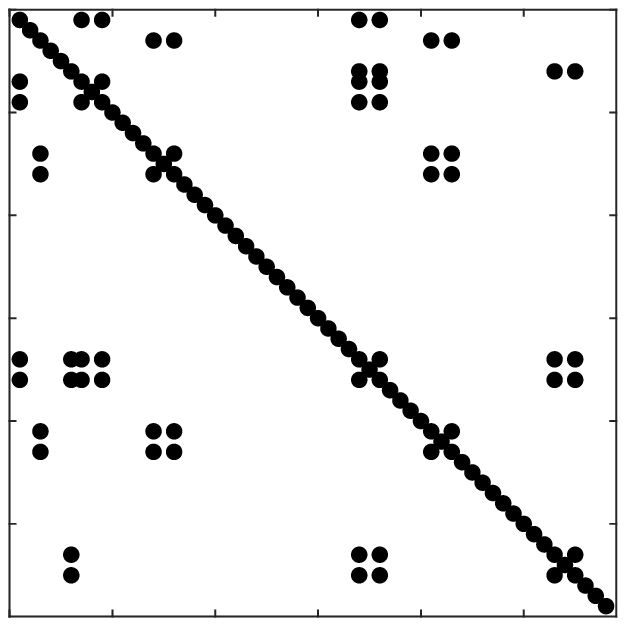}
    }\hspace{2 mm}
    \subfigure[]
    {
		\includegraphics[width=0.28\columnwidth]{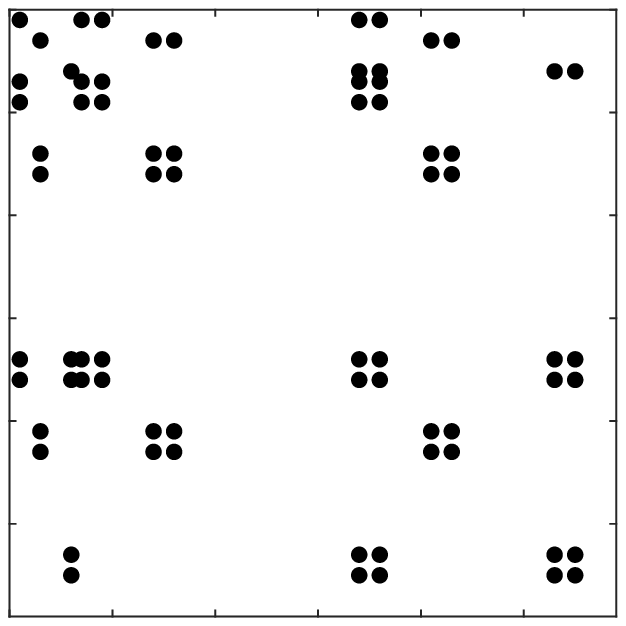}
    }\hspace{2 mm}
    \subfigure[ ]
    {
		\includegraphics[width=0.28\columnwidth]{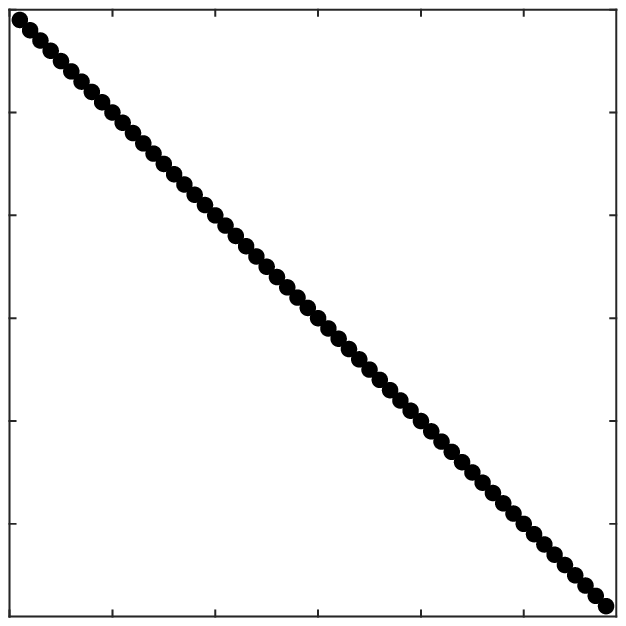}
    }
	\caption{Sparsity patterns for (a) $AA^\tr $, (b) $A_1 A_1^\tr $, and (c) $A_2 A_2^\tr $ for problem \texttt{sosdemo2} in SOSTOOLS~\cite{papachristodoulou2013sostools}.}
    \label{F:SP_Demo2}
    \vspace{-3mm}
\end{figure}

\begin{remark}
    Using the monomial basis to formulate the coefficient matching conditions~\eqref{eq:Coefficient} makes the matrix $A$ sparse, because only a small subset of entries of the matrix $v_d(x)v_d(x)^\tr $ are equal to a given monomial $x^\alpha$. In particular, the density of the nonzero entries of $A_2$ is $\mathcal{O}(n^{-2d})$~\cite{Zheng2017Exploiting}. However, the aggregate sparsity pattern of SDP~\eqref{eq:generalSDP} is dense, so methods that exploit aggregate sparsity in SDPs~\cite{ZFPGW2016,ZFPGW2017chordal,zheng2016fast,fukuda2001exploiting} are not useful for general SOS programs.
    \vspace{-1.5mm}
\end{remark}

\section{A fast ADMM-based algorithm}\label{Section:ADMM}

{Partial orthogonality of the constraint matrix $A$ in conic programs of the form~\eqref{eq:generalSDP} allows for the extension of a first-order, ADMM-based method proposed in~\cite{ODonoghue2016}. To make this paper self-contained, we summarize this algorithm first.}

\subsection{The ADMM algorithm}

The algorithm in~\cite{ODonoghue2016} solves the homogeneous self-dual embedding~\cite{ye1994nl} of the conic program~\eqref{eq:generalSDP} and its dual,
\begin{equation}
\label{eq:generalSDPdual}
    \begin{aligned}
        \max_{y,z}\quad & b^\tr y \\
        \text{s. t.} \quad  & A^\tr y+z = c.  \\
        						 & z \in \mathcal{K}^*,
    \end{aligned}
\end{equation}
where the cone $ \mathcal{K}^*$ is the dual of $ \mathcal{K}$. When strong duality holds, optimal solutions for~\eqref{eq:generalSDP} and~\eqref{eq:generalSDPdual} or a certificate of primal or dual infeasibility can be recovered from a nonzero solution of the homogeneous linear system
\begin{equation} \label{eq:HSDE}
    \begin{bmatrix} z \\ s \\ \kappa\end{bmatrix} = \begin{bmatrix} 0 & -A^\tr  & c \\
                                                                    A & 0 & -b \\
                                                                    -c^\tr  & b^\tr  & 0 \end{bmatrix}  \begin{bmatrix} \xi \\ y \\ \tau\end{bmatrix},
\end{equation}
provided that it also satisfies $(\xi,y,\tau) \in \mathcal{K} \times \mathbb{R}^{m} \times \mathbb{R}_{+}$ and  $(z,s,\kappa) \in \mathcal{K}^* \times \{0\}^{m} \times \mathbb{R}_{+}$. The interested reader is referred to~\cite{ODonoghue2016} and references therein for more details. Consequently, upon defining
\begin{align} \label{eq:DefinitionQ}
  u &:= \begin{bmatrix}
    \xi \\ y \\ \tau
  \end{bmatrix},
  &
  v &:= \begin{bmatrix}
    z \\ s \\ \kappa
  \end{bmatrix},
  &
  Q &:= \begin{bmatrix} 0 & -A^\tr  & c \\
                                    A & 0 & -b \\
                                    -c^\tr  & b^\tr  & 0 \end{bmatrix},
\end{align}
and introducing the cones
$\mathcal{C} := \mathcal{K} \times \mathbb{R}^{m} \times \mathbb{R}_{+}$ and $\mathcal{C}^* := \mathcal{K}^* \times \{0\}^{m} \times \mathbb{R}_{+}$ to ease notation, a primal-dual optimal point for problems \eqref{eq:generalSDP} and \eqref{eq:generalSDPdual} or a certificate of infeasibility can be computed from a nonzero solution of the homogeneous self-dual feasibility problem
\begin{equation} \label{E:ADMMForm}
\begin{aligned}
\text{find} \qquad &(u,v)\\
\text{s. t.} \quad &v = Qu,\\
			      &(u,v) \in \mathcal{C} \times \mathcal{C}^*.
\end{aligned}
\end{equation}

It was shown in~\cite{ODonoghue2016} that~\eqref{E:ADMMForm} can be solved using a simplified version of the classical ADMM algorithm (see \emph{e.g.},~\cite{boyd2011distributed}), whose $k$-th iteration consists of the following three steps ($\mathbb{P}_{\mathcal{C}}$ denotes projection onto the cone $\mathcal C$, and the superscript $(k)$ indicates the value of a variable after the $k$-th iteration):
\begin{subequations} \label{eq:ADMMSteps}
    \begin{align}
    \label{eq:HsdeStep1}
      \hat{u}^{(k)} &= (I+Q)^{-1}\left(u^{(k-1)}+ v^{(k-1)}\right), \\
    \label{eq:HsdeStep2}
      u^{(k)} &= \mathbb{P}_{\mathcal{C}}\left(\hat{u}^{(k)}-v^{(k-1)}\right),\\
    \label{eq:HsdeStep3}
      v^{(k)} &= v^{(k-1)} - \hat{u}^{(k)} + u^{(k)}.
    \end{align}
\end{subequations}
{Practical implementations of the algorithm rely on being able to carry out these steps at moderate computational cost. We next show that partial orthogonality allows  for an efficient implementation of~\eqref{eq:HsdeStep1} when~\eqref{E:ADMMForm} represents an SOS program.}

\subsection{Application to SOS programming}

Each iteration of the ADMM algorithm requires: a projection onto a linear subspace in~\eqref{eq:HsdeStep1} through the solution of a linear system with coefficient matrix $I + Q$; a projection onto the cone $\mathcal{C}$ in~\eqref{eq:HsdeStep2}; and the inexpensive step~\eqref{eq:HsdeStep3}. The conic projection~\eqref{eq:HsdeStep2} can be computed efficiently when the cone size is not too large. On the other hand, $Q \in \mathbb{S}^{t+N^2+m+1}$ and $m = \mathcal{O}(n^{2d})$ is extremely large in SDPs arising from SOS programs. For instance, an SOS program with polynomials of degree $2d=6$ in $n=16$ variables has a PSD variable of size $N=969$ and $m=74\,613$ equality constraints. {This makes step \eqref{eq:HsdeStep1} computationally expensive not only if $I+Q$ is factorized directly, but also when applying the strategies proposed in~\cite{ODonoghue2016}.} Fortunately, $Q$ is highly structured and, in the context of SOS programming, the block-entry $A$ {has partially orthogonal rows} (cf. Propositions~\ref{theo:orthogonality} and~\ref{theo:orthogonalityMatrixSOS}). As we will now show, {these properties can be taken advantage of to achieve substantial computational savings.}

To show how partial orthogonality can be exploited, we begin by noticing that~\eqref{eq:HsdeStep1} requires the solution of a linear system of equations of the form
\begin{equation} \label{eq:LinearSystem}
\begin{bmatrix}
I & -A^\tr  & c \\
A & I & -b \\
-c^\tr  & b^\tr  & 1
\end{bmatrix}
\begin{bmatrix}
\hat{u}_1 \\ \hat{u}_2 \\ \hat{u}_3
\end{bmatrix}
=
\begin{bmatrix}
\omega_1 \\ \omega_2 \\ \omega_3
\end{bmatrix}.
\end{equation}

After letting
$$
M := \begin{bmatrix}I & -A^\tr  \\A & I\end{bmatrix}, \quad \zeta := \begin{bmatrix} c\\ -b \end{bmatrix},
$$
%
and eliminating $\hat{u}_3$ from the first and second block-equations in~\eqref{eq:LinearSystem} we obtain
\begin{subequations}
\begin{align}
(M+\zeta\zeta^\tr ) \begin{bmatrix}\hat{u}_1 \\ \hat{u}_2  \end{bmatrix} &= \begin{bmatrix}\omega_1 \\ \omega_2  \end{bmatrix} - \omega_3 \zeta. \label{E:MatrixInverse} \\
\hat{u}_3 &= \omega_3 + {c}^\tr  \hat{u}_1 - {b}^\tr \hat{u}_2. \label{E:FirstBlockElimination}
\end{align}
\end{subequations}
Applying the matrix inversion lemma~\cite{boyd2004convex} to \eqref{E:MatrixInverse} yields
\begin{equation} \label{E:MatrixInverseResult}
    \begin{bmatrix}\hat{u}_1 \\ \hat{u}_2  \end{bmatrix}  =
    \left[
    I -
    \frac{ (M^{-1} \zeta)\zeta^\tr  }{1 + \zeta^\tr  (M^{-1} \zeta)}
    \right]M^{-1}
    \begin{bmatrix}\omega_1 -c\omega_3 \\ \omega_2+b\omega_3  \end{bmatrix} .
\end{equation}
Note that the first matrix on the right-hand side of~\eqref{E:MatrixInverseResult} only depends on problem data, and can be computed before iterating the ADMM algorithm. Consequently, all that is left to do at each iteration is to solve a linear system of equations of the form
\begin{equation} \label{E:SecondBlockElimination}
    \begin{bmatrix}
    I & -{A}^\tr  \\
    {A} & I
    \end{bmatrix}
    \begin{bmatrix}
    \sigma_1 \\ \sigma_2
    \end{bmatrix}
    = \begin{bmatrix}
    \hat{\omega}_1 \\ \hat{\omega}_2
    \end{bmatrix}.
\end{equation}

Eliminating $\sigma_1$ from the second block-equation in~\eqref{E:SecondBlockElimination} gives
\vspace{-0.5cm}
\begin{subequations} \label{eq:Sigma1Sigma2}
    \begin{align}
        \sigma_1 &= \hat{\omega}_1 + {A}^\tr  \sigma_2,\\
        \label{eq:SecBlkEliResult}
        (I + {A}{A}^\tr ) \sigma_2 &= -{A} \hat{\omega}_1 + \hat{\omega}_2.
    \end{align}
\end{subequations}
It is at this stage that partial orthogonality comes into play: by Propositions~\ref{theo:orthogonality} and~\ref{theo:orthogonalityMatrixSOS}, there exists a diagonal matrix $P$ such that $I+AA^\tr  = I+A_1A_1^\tr +A_2A_2^\tr  = P+A_1A_1^\tr $. Recalling from Section~\ref{Section:PartialOrthogonality} that $A_1\in\mathbb{R}^{m\times t}$ with $t\ll m$ for typical SOS programs (\emph{e.g.}, $t=3$ and $m = 58$ for problem {\small\texttt{sosdemo2}} in SOSTOOLS), it is therefore convenient to apply the matrix inversion lemma to~\eqref{eq:SecBlkEliResult} and write
\begin{align*}
    (I + AA^\tr )^{-1} & = (P + A_1A_1^\tr )^{-1} \\
    &= P^{-1} - P^{-1}A_1(I+A_1^\tr P^{-1}A_1)^{-1}A_1^\tr P^{-1}.
\end{align*}

Since $P$ is diagonal, its inverse is immediately computed. Then, $\sigma_1$ and $\sigma_2$ in~\eqref{eq:Sigma1Sigma2} are found upon solving a $t\times t$ linear system with coefficient matrix
\begin{equation} \label{E:HSDEFactorizationFinal}
    I+A_1^\tr P^{-1}A_1 \in \mathbb{S}^{t},
\end{equation}
plus relatively inexpensive matrix-vector, vector-vector, and scalar-vector operations. Moreover, since the matrix $I+A_1^\tr P^{-1}A_1$ depends only on the problem data and does not change at each iteration, its preferred factorization 
can be cached before iterating steps~\eqref{eq:HsdeStep1}-\eqref{eq:HsdeStep3}.
Once $\sigma_1$ and $\sigma_2$ have been computed, the solution of~\eqref{eq:LinearSystem} can be recovered using vector-vector and scalar-vector operations.

{
\begin{remark} \label{r:flops}
    In~\cite{ODonoghue2016}, system~\eqref{E:SecondBlockElimination} is solved either through a ``direct'' method based on a cached $LDL^\tr$ factorization, or by applying the ``indirect'' conjugate-gradient (CG) method to~\eqref{eq:SecBlkEliResult}. Both these approaches are reasonably efficient, but exploiting partial orthogonality is advantageous because only a smaller linear system with size $t\times t$ need be solved, with $t \leq m$ and typically $t\ll m$. As shown in the Appendix, when sparsity is ignored, each iteration of our method to solve~\eqref{E:SecondBlockElimination} requires $\mathcal{O}(t^2+mN^2+mt)$ floating-point operations (flops), compared to $\mathcal{O}((t+N^2+m)^2)$ flops for the ``direct'' method of~\cite{ODonoghue2016} and $\mathcal{O}(n_{\rm cg} m^2+mN^2+mt)$ flops for the ``indirect'' method with $n_{\rm cg}$ CG iterations. Of course, practical implementations of the methods of~\cite{ODonoghue2016} exploit sparsity and have a much lower complexity than stated, but the results in Section~\ref{Section:Numerical} confirm that the strategy outlined in this work remains more efficient.
\end{remark}
}

\section{{Matrix-valued SOS programs}}\label{Section:MatrixSOS}

{Up to this point we have discussed partial orthogonality for scalar-valued SOS programs, but our results and the algorithm proposed in Section~\ref{Section:ADMM} extend also to the matrix-valued case.

Given symmetric matrices $C_{\alpha} \in \mathbb{S}^r$, we say that the symmetric matrix-valued polynomial
\begin{equation*} 
P(x) := \sum_{\alpha  \in \mathbb{N}^n_{2d} } C_{\alpha} x^{\alpha}
\end{equation*}
is an SOS matrix if there exits a $q\times r$ polynomial matrix $H(x)$ such that
$P(x) = H(x)^\tr H(x)$.} Clearly, an SOS matrix is positive semidefinite for all $x \in \mathbb{R}^n$. It is known~\cite{scherer2006matrix} that $P(x)$ is an SOS matrix if and only if there exists a PSD matrix $Y \in \mathbb{S}^l_+$ with $l= r \times {n+d \choose d}$ such that
\begin{equation} \label{E:matrixSOS}
P(x) = \left(I_r \otimes v_d(x)\right)^\tr  Y \left(I_r \otimes v_d(x)\right).
\end{equation}
Similar to~\eqref{eq:generalSOS}, we consider the matrix-valued SOS program
\begin{equation} \label{E:matrixSOSprogram}
\begin{aligned}
\min_{u}\quad & w^\tr u \\[-1ex]
\text{s. t.} \quad  & P(x) = P_0(x) - \sum_{h=1}^t u_hP_h(x),
\\ &P(x) \text{ is SOS},
\end{aligned}
\end{equation}
where $P_0(x),\,\ldots,\,P_t(x)$ are given symmetric polynomial matrices. {Using~\eqref{E:matrixSOS}, matching coefficients, and vectorizing,} the matrix-valued SOS program~\eqref{E:matrixSOSprogram} can be recast as a conic {program} of standard primal-form~\eqref{eq:generalSDP}, for which the following proposition holds.

\begin{proposition} \label{theo:orthogonalityMatrixSOS}
	The constraint matrix $A$ in the conic program formulation of the matrix-valued SOS problem~\eqref{E:matrixSOSprogram} has partially orthogonal rows, \emph{i.e.}, it can be partitioned into  $A = \begin{bmatrix} A_1 \, A_2\end{bmatrix}$ such that $A_2A_2^\tr $ is diagonal.
\end{proposition}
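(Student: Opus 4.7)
The plan is to reduce the proof to the same disjoint-support argument that underlies Proposition~\ref{theo:orthogonality}, after accounting for the block structure introduced by the Kronecker product $I_r\otimes v_d(x)$. First, I would write the matrix SOS representation~\eqref{E:matrixSOS} explicitly in block form: partitioning $Y$ into $r^2$ blocks $Y_{ij}\in\mathbb{R}^{N\times N}$ of the same size as $v_d(x)v_d(x)^\tr$, a direct calculation gives $(P(x))_{ij} = v_d(x)^\tr Y_{ij}\,v_d(x) = \langle Y_{ij},\,v_d(x)v_d(x)^\tr\rangle$, which using~\eqref{E:MatrixBasis} becomes $(P(x))_{ij} = \sum_{\alpha\in\mathbb{N}^n_{2d}} \langle B_\alpha,Y_{ij}\rangle \,x^\alpha$.

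Next, I would derive the coefficient matching conditions. Writing $P_h(x) = \sum_\alpha C_{h,\alpha}\,x^\alpha$ for $h = 0,\ldots,t$ and equating coefficients entry-by-entry, for every $\alpha\in\mathbb{N}^n_{2d}$ and every pair $(i,j)$ with $i\leq j$ we obtain
\begin{equation*}
\langle B_\alpha,\,Y_{ij}\rangle + \sum_{h=1}^t u_h\,(C_{h,\alpha})_{ij} = (C_{0,\alpha})_{ij}.
\end{equation*}
Re-indexing these constraints by a single integer $k=1,\ldots,m$ (where $m$ counts the pairs $(\alpha,i,j)$ with $i\leq j$), I would then define $A_1$ as the matrix whose $k$-th row collects the scalars $(C_{h,\alpha})_{ij}$ for $h=1,\ldots,t$, and $A_2$ as the matrix whose $k$-th row is the vectorization of the $l\times l$ matrix whose $(i,j)$ and $(j,i)$ blocks equal $B_\alpha$ (suitably symmetrized, or just the upper block if one uses the standard symmetric vectorization) and all other blocks are zero. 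This gives a conic program of the form~\eqref{eq:generalSDP}.

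The core of the argument is then to verify that distinct rows of $A_2$ have disjoint supports, whence $A_2A_2^\tr$ is diagonal. Two cases arise. If the rows correspond to constraints indexed by distinct block-pairs $(i,j)\neq(i',j')$, their nonzero entries lie in different blocks of the vectorized $Y$ and are therefore disjoint. If instead $(i,j)=(i',j')$ but $\alpha\neq\alpha'$, the supports are both contained in the same block, but within that block the supports of $B_\alpha$ and $B_{\alpha'}$ are disjoint by definition~\eqref{eq:coeff}, exactly as in the proof of Proposition~\ref{theo:orthogonality}. Combining the two cases yields $A_2A_2^\tr={\rm diag}(n_1,\ldots,n_m)$, where $n_k$ is the number of nonzero entries of the vectorized block matrix associated with the $k$-th constraint.

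The main obstacle is essentially bookkeeping: one has to be careful about symmetric versus non-symmetric vectorization of $Y$ and about the factor-of-two that appears in off-diagonal blocks ($i<j$) when the symmetric vectorization is used. These constants affect only the diagonal entries $n_k$ of $A_2A_2^\tr$, not its diagonality, so the conclusion of the proposition is unaffected; but this point should be stated explicitly so the reader can verify it in any chosen vectorization convention.
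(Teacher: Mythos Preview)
Your proposal is correct and follows essentially the same approach as the paper. Both arguments partition $Y$ into $N\times N$ blocks $Y_{ij}$, derive the coefficient matching conditions $\langle B_\alpha,Y_{ij}\rangle=(C_\alpha)_{ij}$, and establish row orthogonality of $A_2$ via the same two cases (different block positions, or same block but different $\alpha$); the paper merely formalizes the block-selection step using the Kronecker identities $E_i=e_i\otimes I_N$ and $E_i^\tr E_j=(e_i^\tr e_j)\otimes I_N$ rather than the disjoint-support language you use, and it does not explicitly address the symmetric-vectorization bookkeeping that you (correctly) flag as affecting only the diagonal entries.
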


\begin{proof}
    First, introduce matrices $C_{\alpha}(u)$, affinely dependent on $u$, such that
    $$
    P_0(x) - \sum_{h=1}^t u_hP_h(x) =
    \sum_{\alpha \in \mathbb{N}^n_{2d}}
    C_\alpha(u)\, x^{\alpha}.
    $$
    By virtue of~\eqref{E:MatrixBasis}, the SOS representation~\eqref{E:matrixSOS} of $P(x)$ can be written as
 \begin{equation*}
        P(x) = \sum_{\alpha \in \mathbb{N}^n_{2d}} \begin{bmatrix} \langle Y_{11}, B_{\alpha} \rangle &  \ldots & \langle Y_{1r}, B_{\alpha} \rangle \\
                            \vdots & \ddots & \vdots \\
                            \langle Y_{r1}, B_{\alpha} \rangle& \ldots & \langle Y_{rr}, B_{\alpha} \rangle \end{bmatrix} x^{\alpha},
    \end{equation*}
    where $Y_{ij} \in \mathbb{S}^{N}$, $i,j = 1,\ldots,r$ is the $(i,j)$-th block of matrix $Y \in \mathbb{S}^{l}_+$. Then, the equality constraints in~\eqref{E:matrixSOSprogram} require
    \begin{equation} \label{E:MatrixSOSLinearEqualtiy}
        C_{\alpha}(u)= \begin{bmatrix} \langle Y_{11}, B_{\alpha} \rangle &  \ldots & \langle Y_{r1}, B_{\alpha} \rangle \\
                            \vdots & \ddots & \vdots \\
                            \langle Y_{r1}, B_{\alpha} \rangle& \ldots & \langle Y_{rr}, B_{\alpha} \rangle \end{bmatrix}, \quad \forall \alpha \in \mathbb{N}^n_{2d}.
    \end{equation}
    Upon vectorization, this set of affine equalities can be written compactly as
    \begin{equation}
    \label{eq:matrixSOSconicConstraints}
    \begin{bmatrix}A_1 & A_2 \end{bmatrix}
    \begin{bmatrix}u \\ \text{vec}(Y) \end{bmatrix} = b
    \end{equation}
    for suitably defined matrices $A_1$, $A_2$ and a vector $b$.

    The matrix $A_1$ depends on the matrices $C_\alpha(u)$, and generally has no particular structure. Instead, $A_2$ has orthogonal rows, hence $A_2A_2^\tr$ is diagonal. To see this, let $e_i \in \mathbb{R}^r$ be the standard unit vector in the $i$-th direction and define
    $$
        E_i := e_i \otimes I_N \in \mathbb{R}^{l \times N},
    $$
    so $E_i^\tr YE_j= Y_{ij}$ selects the $(i,j)$-th $N\times N$ block of $Y$. Moreover, let $(C_{\alpha})_{ij}$ denote the $(i,j)$-th element of the matrix $C_{\alpha}$. The linear equalities~\eqref{E:MatrixSOSLinearEqualtiy} require that, for all $i,j = 1, \ldots, r$ and all $\alpha \in \mathbb{N}^n_{2d}$,
    \begin{equation}
        \langle E_i^\tr YE_j, B_{\alpha} \rangle = (C_{\alpha})_{ij}.
    \end{equation}
    Vectorization of the left-hand side yields
    $$
        \text{vec}(B_{\alpha})^\tr  (E_j^\tr  \otimes E_i^\tr ) \text{vec}(Y) = (C_{\alpha})_{ij}.
    $$
    It is then not difficult to see that the rows of the matrix $A_2$ in ~\eqref{eq:matrixSOSconicConstraints} are the vectors $\text{vec}(B_{\alpha})^\tr  \cdot (E_j^\tr  \otimes E_i^\tr )$ for all {triples} $(\alpha,i,j)$ (the precise order of the rows is not  important). To show that $A_2 A_2^\tr$ is diagonal, therefore, it suffices to show that, for any two different {triples} $(\alpha_1, i_1, j_1)$ and $(\alpha_2, i_2, j_2)$,
	\begin{align}
        0 &=
        \text{vec}(B_{\alpha_1})^\tr
        (E_{j_1}^\tr  \otimes E_{i_1}^\tr )
        (E_{j_2} \otimes E_{i_2})
        \text{vec}(B_{\alpha_2})
        \notag \\
        &=  \text{vec}(B_{\alpha_1})^\tr
         (E_{j_1}^\tr E_{j_2} \otimes E_{i_1}^\tr E_{i_2})
          \text{vec}(B_{\alpha_2}),
	\label{E:MatrixRowEquality}
    \end{align}
    where the second equality follows from the properties of the Kronecker product. To show~\eqref{E:MatrixRowEquality}, we invoke the properties of the Kronecker product once again to write
     \begin{subequations}
     \label{E:IndexOrthogonal}
        \begin{align}
            E_i^\tr E_j = (e_i^\tr e_j) \otimes I_N
            &= \begin{cases} I_{N}, &\text{if} \; i = j, \\
            0, & \text{otherwise},
             \end{cases} \\
            \text{vec}(B_{\alpha})^\tr  \text{vec}(B_{\beta})
            &= \begin{cases}
            n_{\alpha}, & \text{if } \alpha = \beta, \\
            0, & \text{otherwise},
             \end{cases}
        \end{align}
    \end{subequations}
    where $n_{\alpha}$ is the number of nonzeros in $B_{\alpha}$. {It is then clear that~\eqref{E:MatrixRowEquality} holds if, and in fact only if, $(\alpha_1, i_1, j_1)\neq (\alpha_2, i_2, j_2)$. Consequently, $A_2 A_2^\tr$ is diagonal.}
\end{proof}

{Proposition~\ref{theo:orthogonalityMatrixSOS} reveals an inherent structural property of SDPs derived from matrix-valued SOS programs using the monomial basis, and the algorithm of Section~\ref{Section:ADMM} applies \textit{verbatim} because the conic program representation of scalar- and matrix-valued SOS programs has the same general form.}

\begin{table*}
	\centering
	\setlength{\abovecaptionskip}{0pt}
	\setlength{\belowcaptionskip}{0em}
	\renewcommand\arraystretch{0.85}
	\caption{CPU time (in seconds) to solve the SDP relaxations of \eqref{eq:ExamplePOP}. $N$ is the size of the largest PSD cone, $m$ is the number of constraints, $t$ is the size of the matrix factorized by CDCS-sos.}
	\label{T:TimePOP}
	\begin{tabular}{c r r r r r r r r r r r r r }
		\hline \toprule[1pt] 
		& \multicolumn{3}{c}{Dimensions} & &  \multicolumn{9}{c}{CPU time (s)} \\
		\cline{2-4} \cline{6-14}\\[-0.75em]
		$n$   & $N$ & $m$ & $t$ & & SeDuMi &   SDPT3  & SDPA  & CSDP  & Mosek & & SCS-direct & SCS-indirect & CDCS-sos \\
		\cline{2-4} \cline{6-10} \cline{12-14}\\[-0.5em]
		$10$ & 66 & 1\,000  & 66 & &  2.6 & 2.1& 1.6 &   2.5  & 0.8 & &  0.4  &  0.4   & 0.4   \\
		$12$ & 91 & 1\,819 & 91 & & 12.3 &    7.0&    5.7 &  4.0 & 2.4 &  &   0.7 &    0.8 &    0.7  \\
		$14$ & 120 & 3\,059 & 120 & &68.4 &   24.2&   18.1&   13.5 & 6.5 & &  1.7&   1.7 &   1.4 \\
		$17$ & 171 & 5\,984 & 171 & & 516.9 &  129.6&   97.9&   75.8& 38.1 &  &  4.6   & 4.4  &  3.5 \\
		$20$ & 231 & 10\,625& 231 & &  2\,547.4    &494.1    &452.7&   374.2  & 178.9  &   &  10.6   & 10.6 &   8.5    \\
		$24$ & 325 & 20\,474 & 325 & &**  & ** & 2\,792.8   & 2\,519.3  & 1\,398.3 & &  32.0 &   31.2   & 22.8   \\
		$29$ & 465 & 40\,919 & 465 & &** & ** & ** & ** & ** & & 125.9 &   126.3  &  67.1   \\
		$35$ & 666 & 82\,250 & 666 & &**  & **& **& **& ** &&  425.3 &   431.3 &   216.9  \\
		$42$ & 946 & 163\,184  & 946 & &** & ** & **&** & **&  &1\,415.8 &   1\,436.9  &  686.6   \\
		\bottomrule[1pt]
	\end{tabular}
	\vspace{-3mm}
\end{table*}

\section{Weighted SOS constraints}
\label{Section:weighted-SOS}

The discussion of Section~\ref{Section:PartialOrthogonality} is general and encompasses all SOS programs once they are recast in the form~\eqref{eq:generalSOS-multipleSOS}. As already mentioned in Section~\ref{Section:GeneralSOS}, handling SOS constraints over semialgebraic sets through~\eqref{eq:generalSOS-multipleSOS} requires introducing extra optimization variables, which is not desirable in practice. To overcome this difficulty, we show here that partial orthogonality holds also for so-called ``weighted'' SOS constraints.
Specifically, consider a family of fixed polynomials $g_0,\,\ldots,\,g_t \in \mathbb{R}[x]_{n,2d}$, a second family of fixed polynomials $p_1\in \mathbb{R}[x]_{n,d_1},\,\ldots,\,p_k \in \mathbb{R}[x]_{n,d_k}$, and let $\omega_i:= \lfloor d-d_i/2\rfloor$ for each $i=1,\,\ldots,\,k$. (We have assumed that $d_1,\,\ldots,\,d_k \leq 2d$ without loss of generality.) We say that the polynomial
\begin{equation}
\label{e:g-poly}
g(x) := g_0(x) - \sum_{i=1}^t u_ig_i(x)
\end{equation}
is a weighted SOS with respect to $p_1,\,\ldots,\,p_k$ if there exist SOS polynomials $s_0 \in \Sigma[x]_{n,2d}$ and $s_i \in \Sigma[x]_{n,2\omega_i}$, $i=1,\,\ldots,\,k$, such that
\begin{equation}
\label{e:weighted-SOS}
g(x) = s_0(x) + \sum_{i=1}^k p_i(x) s_i(x).
\end{equation}
{It is not difficult to see that if $g(x)$ is a weighted SOS with respect to $p_1,\,\ldots,\,p_k$, then it is non-negative on the semialgebraic set $\mathcal{S}:=\{x \in\mathbb{R}^n :\, p_1(x)\geq 0,\,\ldots,\,p_k(x) \geq 0\}$. Thus, weighted SOS constraints arise naturally when polynomial inequalities on semialgebraic sets are cast as SOS conditions using the \textit{Positivstellensatz}~\cite{anderson2015advances}.}

To put~\eqref{e:weighted-SOS} in the form used by the standard conic program~\eqref{eq:generalSDP}, we begin by introducing Gram matrix representations for each SOS poynomial. That is, we consider matrices $X_0 \in \mathbb{S}_+^{N_0},\,X_1 \in \mathbb{S}_+^{N_1},\,\ldots,\,X_k \in \mathbb{S}_+^{N_k}$, with $N_0 := {n + d \choose d}$ and $N_i = {n + \omega_i \choose \omega_i}$ for $i=1,\,\ldots,\,k$, and rewrite~\eqref{e:weighted-SOS} as
\begin{multline}
\label{e:weighted-SOS-2}
g(x) = \langle v_d(x) v_d(x)^\tr, X_0\rangle \\
+ \sum_{i=1}^k p_i(x) \langle v_{\omega_i}(x) v_{\omega_i}(x)^\tr, X_i\rangle.
\end{multline}
In this expression, the vector $v_d(x)$ is as in~\eqref{eq:CandidateMonomials} and, similarly, $v_{\omega_i}(x)$ lists the monomials of degree no larger than $\omega_i$.

At this stage, let $B_{\alpha}$ be the {mutually orthogonal} $0/1$ indicator matrix for the monomial $x^{\alpha}$ in the outer product matrix $v_d(x)v_d(x)^\tr $, defined as in~\eqref{eq:coeff}, such that~\eqref{E:MatrixBasis} holds. Similarly, introduce symmetric {indicator} matrices $B_{\alpha}^{(i)}$ such that
\begin{equation*}
p_i(x)v_{\omega_i}(x)v_{\omega_i}^\tr(x) = \sum_{\alpha \in \mathbb{N}^n_{2d}} B_{\alpha}^{(i)} x^{\alpha}.
\end{equation*}
{Note that the matrices $B_{\alpha}^{(i)}$ are not pairwise orthogonal in general: their nonzero entries overlap to some extent because the entries of the matrix $p_i(x)v_{\omega_i}(x)v_{\omega_i}^\tr(x)$ are typically polynomials rather than simple monomials. Pairwise orthogonality holds for $B_{\alpha}^{(i)}$  if $p_i$ is a monomial, but this is uncommon in practice.}
Using such indicator matrices,~\eqref{e:weighted-SOS-2} can be written as
\begin{equation}
\label{e:weighted-SOS-3}
g(x) = \sum_{\alpha \in \mathbb{N}^n_{2d}} \left( \langle B_{\alpha}, X_0 \rangle + \sum_{i=1}^k \langle B_{\alpha}^{(i)}, X_i \rangle   \right) x^{\alpha},
\end{equation}
and we require that the coefficients of the monomials $x^\alpha$ on both sides of this expression match. To do this in compact notation, we index the monomials $x^\alpha$ using integers $1,\,\ldots,\,m$ as in Section~\ref{Section:PartialOrthogonality} and define the $m \times \sum_{i=1}^k N_i^2$ matrix
\begin{equation}
A_2 := \begin{bmatrix}
{\rm vec}(B_1^{(1)})^\tr & \cdots & {\rm vec}(B_1^{(k)})^\tr \\
\vdots & & \vdots\\
{\rm vec}(B_m^{(1)})^\tr & \cdots & {\rm vec}(B_m^{(k)})^\tr
\end{bmatrix},
\end{equation}
the $m \times N_0^2$ matrix
\begin{equation}
A_3 := \begin{bmatrix}
{\rm vec}(B_1)^\tr\\
\vdots\\
{\rm vec}(B_m)^\tr
\end{bmatrix},
\end{equation}
and the vector
\begin{equation}
\chi := \left[ {\rm vec}(X_1)^\tr,\, \cdots,\, {\rm vec}(X_k)^\tr \right]^\tr.
\end{equation}
Recalling the definition of $g(x)$ in~\eqref{e:g-poly}, we can then use the $m \times t$ matrix $A_1$ defined in~\eqref{E:VecMatrixDef} and the vector $b$ in~\eqref{e:b-vector} to write the coefficient matching conditions obtained from~\eqref{e:weighted-SOS-3} in the matrix-vector form
\begin{equation}
\label{e:weighted-sos-conic}
\begin{bmatrix} A_1 & A_2 & A_3\end{bmatrix}
\begin{bmatrix} u \\ \chi \\ {\rm vec}(X_0) \end{bmatrix}
= b.
\end{equation}

{As already noticed in Section~\ref{Section:PartialOrthogonality}, nonzero entries in $B_i$ must be zero in $B_j$ if $i\neq j$, so the rows of {$A_3$} are mutually orthogonal. Since~\eqref{e:weighted-sos-conic} corresponds to the equality constraints in the conic program formulation of a weighted SOS constraint, we obtain the following result.}
\begin{proposition}
The constraint matrix in the conic program formulation of the weighted SOS constraint~\eqref{e:weighted-SOS} has partially orthogonal rows, \emph{i.e.}, it can be partitioned as  $\begin{bmatrix} A_1 \, A_2 \, A_3\end{bmatrix}$ such that $A_3A_3^\tr $ is diagonal.
\end{proposition}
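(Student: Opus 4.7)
The plan is to prove the proposition by direct reduction to the argument already used in Proposition~\ref{theo:orthogonality}. The key observation is that the block $A_3$ in the partitioning $[A_1\,A_2\,A_3]$ is structurally identical to the matrix $A_2$ appearing in the conic formulation~\eqref{E:VecMatrixDef} of a plain (unweighted) scalar SOS program: its rows are precisely the vectorizations $\mathrm{vec}(B_1)^\tr,\,\ldots,\,\mathrm{vec}(B_m)^\tr$ of the $0/1$ indicator matrices $B_\alpha$ associated with the monomials $x^\alpha$ in $v_d(x)v_d(x)^\tr$. Consequently, the same pairwise-orthogonality argument applies verbatim.

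Specifically, I would first recall from the definition~\eqref{eq:coeff} that the entry $(B_\alpha)_{\beta,\gamma}$ is nonzero only if $\beta+\gamma=\alpha$. Hence, for any two distinct multi-indices $\alpha\neq \alpha'$, the supports of $B_\alpha$ and $B_{\alpha'}$ are disjoint. Re-indexing the $B_\alpha$'s with integers $i=1,\,\ldots,\,m$ and letting $n_i$ denote the number of nonzero entries of $B_i$, this immediately gives
$$
\mathrm{vec}(B_i)^\tr \mathrm{vec}(B_j)=\begin{cases} n_i & \text{if } i=j,\\ 0 & \text{otherwise,}\end{cases}
$$
so that $A_3 A_3^\tr = \mathrm{diag}(n_1,\,\ldots,\,n_m)$, proving the claim.

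There is no real obstacle here, but it is worth emphasizing what the proposition does \emph{not} assert, since this is where a reader might expect a difficulty. The weighted indicator matrices $B_\alpha^{(i)}$ making up $A_2$ generally do \emph{not} have pairwise disjoint supports: because $p_i(x)v_{\omega_i}(x)v_{\omega_i}(x)^\tr$ has polynomial (not monomial) entries, the nonzero patterns of $B_\alpha^{(i)}$ and $B_{\alpha'}^{(i)}$ can overlap, as noted in the paragraph preceding the proposition. Thus $A_2 A_2^\tr$ is generally dense, and any attempt to push the orthogonality argument onto $A_2$ must fail unless the weights $p_i$ are themselves monomials. The strength of the statement is precisely that $A_3$, the block coupling to the ``unweighted'' Gram matrix $X_0$, retains the orthogonality structure in full; this is enough to yield a ``diagonal-plus-low-rank'' decomposition of $AA^\tr$ (with the $A_1$ and $A_2$ blocks contributing the low-rank part), so that the matrix-inversion-lemma step of the ADMM algorithm in Section~\ref{Section:ADMM} still applies.
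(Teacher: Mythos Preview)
Your proof is correct and matches the paper's argument exactly: the paper simply observes that $A_3$ is built from the same vectorized indicator matrices $\mathrm{vec}(B_\alpha)$ as in Proposition~\ref{theo:orthogonality}, so the disjoint-support argument carries over verbatim to give $A_3A_3^\tr=\mathrm{diag}(n_1,\ldots,n_m)$. Your additional remark that $A_2A_2^\tr$ is generally \emph{not} diagonal (because the weighted indicators $B_\alpha^{(i)}$ can have overlapping supports) also mirrors the paper's own caveat.
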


{In other words, partial orthogonality obtains also when weighted SOS constraints are dealt with directly. Thus, the ADMM algorithm descibed in Section~\ref{Section:ADMM} can in principle be applied to solve SOS programs with weighted SOS constraints. Applying the matrix inversion lemma as proposed in Section~\ref{Section:ADMM} is advantageous if $t + \sum_{i=1}^k N_i^2 < m$, meaning that the degree $\omega_1,\,\ldots,\,\omega_k$ of the SOS polynomials $s_1,\,\ldots,\,s_k$ in~\eqref{e:weighted-SOS} should be small such that
\begin{equation}
\label{e:advantage-condition-weighted-sos}
t + \sum_{i=1}^k {n+\omega_i \choose \omega_i} < {n+2d \choose 2d} =: m.
\end{equation}
Table~\ref{T:TimePOP} confirms that this is not unusual for typical problems. When~\eqref{e:advantage-condition-weighted-sos} does not hold, instead of implementing weighted SOS constraints directly, it may be more convenient introduce extra polynomials as described at the end of Section~\ref{Section:GeneralSOS}.}

\section{Numerical Experiments} \label{Section:Numerical}

We implemented the algorithm of~\cite{ODonoghue2016}, extended to take into account partial orthogonality in SOS programs, as a new package in the open-source {MATLAB} solver CDCS~\cite{CDCS}. Our implementation, which we refer to as CDCS-sos, solves step~\eqref{eq:HsdeStep1} using a sparse permuted Cholesky factorization of the matrix in~\eqref{E:HSDEFactorizationFinal}. The source code can be downloaded from \url{https://github.com/oxfordcontrol/CDCS}.
%

We tested CDCS-sos on a series of SOS programs and our scripts are available from \url{https://github.com/zhengy09/sosproblems}. 
CPU times were compared to the direct and indirect implementations of the algorithm of~\cite{ODonoghue2016} provided by the solver SCS~\cite{scs}, referred to as SCS-direct and SCS-indirect, respectively. In our experiments, the termination tolerance for CDCS-sos and SCS was set to $10^{-3}$, and the maximum number of iterations was 2\,000. Since first-order methods only aim at computing a solution of moderate accuracy, we assessed the suboptimality of the solution returned by CDCS-sos by comparing it to an accurate solution computed with the interior-point solver SeDuMi~\cite{sturm1999using}. Besides, to demonstrate the low memory requirements of first-order algorithms, we also tested the interior-point solvers SDPT3~\cite{toh1999sdpt3}, SDPA~\cite{yamashita2003implementation}, CSDP~\cite{borchers1999csdp} and Mosek~\cite{mosek2010mosek} for comparison. {All interior-point solvers were called with their default parameters and their optimal values (when available) agree to within $10^{-8}$. All computations were carried out on a PC with a 2.8 GHz Intel\textsuperscript{\textregistered} Core\textsuperscript{\texttrademark} i7 CPU and 8GB of RAM; memory overflow is marked by ** in the tables below.}

\subsection{Constrained polynomial optimization} \label{SubSection:ConstrainedPOP}

As our first numerical experiment, we considered the constrained quartic polynomial minimization problem
\begin{equation} \label{eq:ExamplePOP}
    \begin{aligned}
    \min_{x} \quad &\sum_{1\leq i < j \leq n} (x_i x_j +x_i^2x_j - x_j^3 - x_i^2 x_j^2) \\
    \text{s. t.} \quad & \sum_{i=1}^n x_i^2 \leq 1.
    \end{aligned}
\end{equation}
We used the Lasserre relaxation of order $2d = 4$ and the parser GloptiPoly~\cite{henrion2003gloptipoly} to recast~\eqref{eq:ExamplePOP} into an SDP.

Table~\ref{T:TimePOP} reports the CPU time (in seconds) required by each of the solvers we tested to solve the SDP relaxations as the number of variables $n$ was increased. CDCS-sos is the fastest method in all cases. For large-scale POPs ($n \geq 29$), the number of constraints in the resulting SDP is over $40,000$, and all interior-point solvers (SeDuMi, SDPT3, SDPA, CSDP and Mosek) {ran out of memory on our machine}. The first-order solvers do not suffer from this limitation, and for POPs with $n \geq 29$ variables {our MATLAB solver} was approximately twice as fast as SCS. {This is remarkable considering the SCS is written in C, and} is due to the fact that $t \ll m$, cf. Table~\ref{T:TimePOP}, so the cost of the affine projection step~\eqref{eq:HsdeStep1} in CDCS-sos is greatly reduced compared to the methods implemented in SCS. {Figure~\ref{Fig:QuarticTimeAver}(a)} illustrates that, for all test problems, CDCS-sos was faster than both SCS-direct and SCS-indirect also in terms of average CPU time per 100 iterations (this metric is unaffected by differences in the termination criteria used by different solvers). Finally, Table~\ref{T:CostPOP} shows that although first-order methods only aim to provide solutions of moderate accuracy, the objective value returned by CDCS-sos and SCS was always within 0.5\% of the high-accuracy optimal value computed using interior-point solvers. Such a small difference may be considered negligible in many applications.

\begin{table}[t]
    \centering
    \setlength{\abovecaptionskip}{0pt}
    \setlength{\belowcaptionskip}{0pt}
    \caption{Terminal objective value from interior-point solvers, SCS-direct, SCS-indirect and CDCS-sos for the SDP relaxation of~\eqref{eq:ExamplePOP}.}
    \label{T:CostPOP}
    \begin{tabular}{c c c c c}
        \hline \toprule[1pt] 
      n  &  \textsuperscript{\textdagger}Interior-point solvers & {SCS-direct} & {SCS-indirect} & {CDCS-sos} \\
         \hline 
        $10$ & $-9.11$  & $-9.12$  &  $-9.13$ & $-9.10$ \\
        $12$ & $-11.12$ & $-11.10$  &  $-11.10 $  &  $-11.11 $ \\
        $14$ & $-13.12$ & $-13.09$  &  $ -13.09$   &$ -13.12$  \\
        $17$ & $-16.12$ & $-16.09$   & $ - 16.09$ &$ - 16.06$ \\
        $20$ & $-19.12$ & $-19.17$  & $-19.17$   &$-19.08$ \\
        $24$ & $-23.12$ &  $-23.04$ & $-23.04$   &$-23.15$  \\
        $29$ & ** & $-28.17$  & $-28.18 $  & $-28.17$ \\
        $35$ & ** & $-34.05$ & $-34.05$ & $-34.08$  \\
        $42$ & ** & $-41.21$ & $-41.21$ & $-41.05$ \\
        \bottomrule[1pt]
        \end{tabular}
\end{table}

\subsection{Finding Lyapunov functions} \label{SubSection:Lyapunov}

In our next numerical experiment, we considered the problem of constructing Lyapunov functions to verify local stability of polynomial systems, \emph{i.e.}, we solved the SOS relaxation of~\eqref{eq:FindLyapunov_a}-\eqref{eq:FindLyapunov_b} for different system instances. 
We used SOSTOOLS~\cite{papachristodoulou2013sostools} to generate the corresponding SDPs.

\begin{figure}
	\centering
	\includegraphics[scale=0.85]{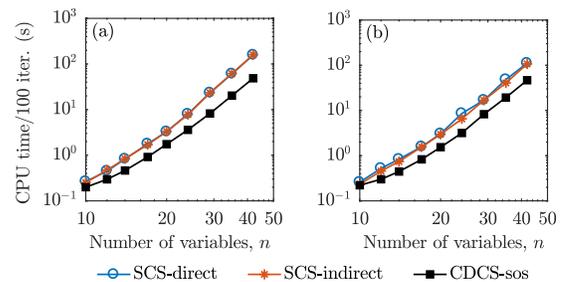}
	\caption{Average CPU time per 100 iterations for the SDP relaxations of:
		(a) the POP~\eqref{eq:ExamplePOP}; (b) the Lyapunov function search problem.}
	\label{Fig:QuarticTimeAver}
\end{figure}

\begin{table*}[t]
	\centering
	\setlength{\abovecaptionskip}{0pt}
	\setlength{\belowcaptionskip}{0em}
	\caption{CPU time (in seconds) to solve the SDP relaxations of~\eqref{eq:FindLyapunov_a}-\eqref{eq:FindLyapunov_b}. $N$ is the size of the largest PSD cone, $m$ is the number of constraints, $t$ is the size of the matrix factorized by CDCS-sos.}
	\label{T:TimeLyapunov}
	\begin{tabular}{c rrr r rrrrr r rrr }
		\hline \toprule[1pt] 
		& \multicolumn{3}{c}{Dimensions} & &  \multicolumn{9}{c}{CPU time (s)} \\
		\cline{2-4} \cline{6-14}\\[-0.75em]
		$n$   & $N$ & $m$ & $t$ & & SeDuMi  & SDPT3 & SDPA  & CSDP & Mosek&  & SCS-direct & SCS-indirect & CDCS-sos \\
		\cline{2-4} \cline{6-10} \cline{12-14}\\[-0.5em]
		$10$ & 65 & 1\,100 & 110 &
		& 2.8 &  1.8 &  2.0 &   2.6  & 0.7 &
		&  0.2  &  0.2   & 0.3   \\
		$12$ & 90 & 1\,963 & 156 &
		& 6.3 & 4.9 &   3.5 &  1.0 & 2.1 &
		&   0.3 &    0.3 &  0.4  \\
		$14$ & 119 & 3\,255 & 210 &
		& 36.2 & 16.3 &  44.8&   2.6 & 5.5 &
		&  0.8&   0.7 &   0.6 \\
		$17$ & 170 & 6\,273 & 306 &
		& 265.1 & 78.0 &   204.7&   9.5 & 26.9 &
		&  1.3   & 1.3  &  1.1 \\
		$20$ & 230 & 11\,025& 420 &
		& 1\,346.0 &  361.3      &940.5&    40.4  & 112.5&
		&  3.1   & 3.0 &   2.4    \\
		$24$ & 324 & 21\,050 & 600 &
		&** & **  & 8\,775.5    & 238.4  & 632.2&
		&  15.1 &   6.6   & 5.1   \\
		$29$ & 464 & 41\,760 & 870 &
		&** & ** & ** & ** & ** &
		& 17.1 &   16.9  &  14.3   \\
		$35$ & 665 & 83\,475 & 1260 &
		&**  & **& ** & ** & ** &
		& 67.6 &   57.1 &   37.4  \\
		$42$ & 945 & 164\,948  & 1806 &
		&** & ** & ** & ** & **&
		&133.7 &   129.2  &  92.8 \\
		\bottomrule[1pt]
	\end{tabular}
\end{table*}

In the experiment, we randomly generated polynomial dynamical systems $\dot{x} = f(x)$ of degree three with a {linearly} stable equilibrium at the origin. We then checked for local nonlinear stability in the ball $\mathcal{D} = \{x\in\mathbb{R}^n: \sum_{i=1}^n x_i^2 \leq 0.1\}$ {using a quadratic Lyapunov function of the form $V(x) = x^\tr Q x$ and Positivstellensatz to derive SOS conditions from~\eqref{eq:FindLyapunov_a} and~\eqref{eq:FindLyapunov_b} (see \emph{e.g.},~\cite{anderson2015advances} for more details).} The total CPU time required by the solvers we tested are reported in Table~\ref{T:TimeLyapunov}, while {Figure~\ref{Fig:QuarticTimeAver}(b)} shows the average CPU times per 100 iterations for SCS and CDCS-sos. As in our previous experiment, the results clearly show that the iterations in CDCS-sos are faster than in SCS for all our random problem instances, { and that both first-order solvers have low memory requirements and are able to solve large-scale problems ($n \geq 29$) beyond the reach of interior-point solvers.}

\subsection{A practical example: Nuclear receptor signalling}

{As our last example, we considered a $37$-state model of nuclear receptor signalling with a cubic vector field and an equilibrium point at the origin~\cite[Chapter 6]{khoshnaw2015model}.} We verified its local stability within a ball of radius $0.1$ by constructing a quadratic Lyapunov function. SOSTOOLS~\cite{papachristodoulou2013sostools} was used to recast the SOS relaxation of~\eqref{eq:FindLyapunov_a}-\eqref{eq:FindLyapunov_b} as an SDP with constraint matrix of size $102\,752 \times 553\,451$ and a large PSD cone of linear dimension $741$. Such a large-scale problem is currently beyond the reach of interior-point methods on a regular desktop computer, and all of the interior point solvers we tested (SeDuMi, SDPT3, SDPA, CSDP and Mosek) ran out of memory on our machine. On the other hand, the first-order solvers CDCS-sos and SCS managed to construct a valid Lyapunov function, {with our partial-orthogonality-exploiting algorithm being more than twice as fast as SCS ($148\,$s vs. $\approx400\,$s for both SCS-direct and SCS-indirect).}

\section{Conclusion}
\label{Section:Conclusion}

In this paper, we proved that SDPs arising from SOS programs formulated using the standard monomial basis possess a structural property that we call partial orthogonality. We then demonstrated that this property can be leveraged to substantially reduce the computational cost of an ADMM algorithm for conic programs proposed in~\cite{ODonoghue2016}. Specifically, we showed that the iterates of this algorithm can be projected efficiently onto a set defined by the affine constraints of the SDP. The key idea is to exploit a ``diagonal plus low rank'' structure of a large matrix that needs to be inverted/factorized, which is a direct consequence of partial orthogonality. Numerical experiments on large-scale SOS programs demonstrate that the method proposed {in this paper
yield considerable savings compared to many state-of-the-art solvers.} For this reason we expect that our method will {facilitate the} use of SOS programming for the analysis and design of large-scale systems.

\bibliographystyle{IEEEtran}
\bibliography{Reference}

\begin{thebibliography}{10}
\providecommand{\url}[1]{#1}
\csname url@samestyle\endcsname
\providecommand{\newblock}{\relax}
\providecommand{\bibinfo}[2]{#2}
\providecommand{\BIBentrySTDinterwordspacing}{\spaceskip=0pt\relax}
\providecommand{\BIBentryALTinterwordstretchfactor}{4}
\providecommand{\BIBentryALTinterwordspacing}{\spaceskip=\fontdimen2\font plus
\BIBentryALTinterwordstretchfactor\fontdimen3\font minus
  \fontdimen4\font\relax}
\providecommand{\BIBforeignlanguage}[2]{{%
\expandafter\ifx\csname l@#1\endcsname\relax
\typeout{** WARNING: IEEEtran.bst: No hyphenation pattern has been}%
\typeout{** loaded for the language `#1'. Using the pattern for}%
\typeout{** the default language instead.}%
\else
\language=\csname l@#1\endcsname
\fi
#2}}
\providecommand{\BIBdecl}{\relax}
\BIBdecl

\bibitem{lasserre2009moments}
J.~B. Lasserre, \emph{Moments, positive polynomials and their
  applications}.\hskip 1em plus 0.5em minus 0.4em\relax World Scientific, 2009.

\bibitem{anderson2015advances}
J.~Anderson and A.~Papachristodoulou, ``Advances in computational {Lyapunov}
  analysis using sum-of-squares programming,'' \emph{Discrete Contin. Dynam.
  Syst. B}, vol.~20, no.~8, 2015.

\bibitem{parrilo2003semidefinite}
P.~A. Parrilo, ``Semidefinite programming relaxations for semialgebraic
  problems,'' \emph{Math. Program.}, vol.~96, no.~2, pp. 293--320, 2003.

\bibitem{lasserre2001global}
J.~B. Lasserre, ``Global optimization with polynomials and the problem of
  moments,'' \emph{SIAM J. Optim.}, vol.~11, no.~3, pp. 796--817, 2001.

\bibitem{papachristodoulou2005tutorial}
A.~Papachristodoulou and S.~Prajna, ``A tutorial on sum of squares techniques
  for systems analysis,'' in \emph{Am. Control Conf. (ACC)}.\hskip 1em plus
  0.5em minus 0.4em\relax IEEE, 2005, pp. 2686--2700.

\bibitem{permenter2014basis}
F.~Permenter and P.~A. Parrilo, ``Basis selection for {SOS} programs via facial
  reduction and polyhedral approximations,'' in \emph{53rd IEEE Conf. Decis.
  Control (CDC)}, 2014, pp. 6615--6620.

\bibitem{reznick1978extremal}
B.~Reznick \emph{et~al.}, ``Extremal \protect{PSD} forms with few terms,''
  \emph{Duke Math. J.}, vol.~45, no.~2, pp. 363--374, 1978.

\bibitem{lofberg2009pre}
J.~L{\"o}fberg, ``Pre-and post-processing sum-of-squares programs in
  practice,'' \emph{IEEE Trans. Autom. Control}, vol.~54, no.~5, pp.
  1007--1011, 2009.

\bibitem{gatermann2004symmetry}
K.~Gatermann and P.~A. Parrilo, ``Symmetry groups, semidefinite programs, and
  sums of squares,'' \emph{J. Pure Appl. Algebra}, vol. 192, no.~1, pp.
  95--128, 2004.

\bibitem{waki2006sums}
H.~Waki, S.~Kim, M.~Kojima, and M.~Muramatsu, ``Sums of squares and
  semidefinite program relaxations for polynomial optimization problems with
  structured sparsity,'' \emph{SIAM J. Control Optim.}, vol.~17, no.~1, pp.
  218--242, 2006.

\bibitem{ahmadi2017dsos}
A.~A. Ahmadi and A.~Majumdar, ``{DSOS} and {SDSOS} optimization: more tractable
  alternatives to sum of squares and semidefinite optimization,''
  arXiv:1706.02586, 2017.

\bibitem{ahmadi2015sum}
A.~A. Ahmadi and G.~Hall, ``Sum of squares basis pursuit with linear and second
  order cone programming,'' arXiv:1510.01597, 2015.

\bibitem{wen2010alternating}
Z.~Wen, D.~Goldfarb, and W.~Yin, ``Alternating direction augmented {Lagrangian}
  methods for semidefinite programming,'' \emph{Math. Program. Comput.},
  vol.~2, no. 3-4, pp. 203--230, 2010.

\bibitem{ODonoghue2016}
B.~O'Donoghue, E.~Chu, and S.~Parikh, Nealand~Boyd, ``Conic optimization via
  operator splitting and homogeneous self-dual embedding,'' \emph{J. Optim.
  Theory Appl.}, vol. 169, no.~3, pp. 1042--1068, 2016.

\bibitem{ZFPGW2016}
Y.~Zheng, G.~Fantuzzi, A.~Papachristodoulou, P.~Goulart, and A.~Wynn, ``{Fast
  ADMM for Semidefinite Programs with Chordal Sparsity},'' in \emph{Proc. 2017
  Am. Control Conf.}\hskip 1em plus 0.5em minus 0.4em\relax IEEE, 2017, pp.
  3335--3340.

\bibitem{zheng2016fast}
------, ``{Fast ADMM for homogeneous self-dual embeddings of sparse SDPs},''
  \emph{IFAC-PapersOnline}, vol.~50, no.~1, pp. 8411--8416, 2017.

\bibitem{ZFPGW2017chordal}
------, ``Chordal decomposition in operator-splitting methods for sparse
  semidefinite programs,'' arXiv:1707.05058, 2017.

\bibitem{henrion2012projection}
D.~Henrion and J.~Malick, ``Projection methods in conic optimization,'' in
  \emph{Handbook on Semidefinite, Conic and Polynomial Optimization}.\hskip 1em
  plus 0.5em minus 0.4em\relax Springer, 2012, pp. 565--600.

\bibitem{bertsimas2013accelerated}
D.~Bertsimas, R.~M. Freund, and X.~A. Sun, ``An accelerated first-order method
  for solving {SOS} relaxations of unconstrained polynomial optimization
  problems,'' \emph{Optim. Methods Softw.}, vol.~28, no.~3, pp. 424--441, 2013.

\bibitem{nie2012regularization}
J.~Nie and L.~Wang, ``Regularization methods for {SDP} relaxations in
  large-scale polynomial optimization,'' \emph{SIAM J. Optim.}, vol.~22, no.~2,
  pp. 408--428, 2012.

\bibitem{Zheng2017Exploiting}
Y.~Zheng, G.~Fantuzzi, and A.~Papachristodoulou, ``{Exploiting aparsity in the
  coefficient matching conditions in sum-of-squares programming using ADMM},''
  \emph{IEEE Control Syst. Lett.}, vol.~1, no.~1, pp. 80--85, 2017.

\bibitem{BGSBfeasibility2017}
G.~Banjac, P.~Goulart, B.~Stellato, and S.~Boyd, ``Infeasibility detection in
  the alternating direction method of multipliers for convex optimization,''
  optimization-online.org, Jun 2017.

\bibitem{liu2017new}
Y.~Liu, E.~K. Ryu, and W.~Yin, ``A new use of {Douglas-Rachford} splitting and
  {ADMM} for identifying infeasible, unbounded, and pathological conic
  programs,'' arXiv:1706.02374, 2017.

\bibitem{papachristodoulou2013sostools}
A.~Papachristodoulou, J.~Anderson, G.~Valmorbida, S.~Prajna, P.~Seiler, and
  P.~Parrilo, ``{SOSTOOLS} version 3.00 sum of squares optimization toolbox for
  {MATLAB},'' arXiv:1310.4716, 2013.

\bibitem{henrion2003gloptipoly}
D.~Henrion and J.-B. Lasserre, ``{GloptiPoly}: Global optimization over
  polynomials with {MATLAB and SeDuMi},'' \emph{ACM Trans. Math. Softw.},
  vol.~29, no.~2, pp. 165--194, 2003.

\bibitem{lofberg2004yalmip}
J.~L{\"o}fberg, ``\protect{YALMIP}: A toolbox for modeling and optimization in
  matlab,'' in \emph{Proc. IEEE Int. Symp. Computer Aided Control Syst.
  Design}.\hskip 1em plus 0.5em minus 0.4em\relax IEEE, 2004, pp. 284--289.

\bibitem{CDCS}
{CDCS}: Cone Decomposition Conic Solver.
  \url{https://github.com/oxfordcontrol/CDCS}, Sep. 2016.

\bibitem{sturm1999using}
J.~F. Sturm, ``Using {SeDuMi} 1.02, a {MATLAB} toolbox for optimization over
  symmetric cones,'' \emph{Optimiz. Meth. Softw.}, vol.~11, no. 1-4, pp.
  625--653, 1999.

\bibitem{toh1999sdpt3}
K.-C. Toh, M.~J. Todd, and R.~H. T{\"u}t{\"u}nc{\"u}, ``{SDPT3---a MATLAB}
  software package for semidefinite programming, version 1.3,'' \emph{Optimiz.
  Meth. Softw.}, vol.~11, no. 1-4, pp. 545--581, 1999.

\bibitem{yamashita2003implementation}
M.~Yamashita, K.~Fujisawa, and M.~Kojima, ``Implementation and evaluation of
  {SDPA} 6.0 (semidefinite programming algorithm 6.0),'' \emph{Optimiz. Meth.
  Softw.}, vol.~18, no.~4, pp. 491--505, 2003.

\bibitem{borchers1999csdp}
B.~Borchers, ``{CSDP}, a {C} library for semidefinite programming,''
  \emph{Optimiz. Meth. Softw.}, vol.~11, no. 1-4, pp. 613--623, 1999.

\bibitem{mosek2010mosek}
The MOSEK optimization software. \url{http://www. mosek. com}.

\bibitem{scs}
{SCS}: Splitting Conic Solver. \url{https://github.com/cvxgrp/scs}, Apr. 2016.

\bibitem{papachristodoulou2002construction}
A.~Papachristodoulou and S.~Prajna, ``On the construction of \protect{Lyapunov}
  functions using the sum of squares decomposition,'' in \emph{41st IEEE Conf.
  Decis. Control (CDC)}, vol.~3.\hskip 1em plus 0.5em minus 0.4em\relax IEEE,
  2002, pp. 3482--3487.

\bibitem{powers1998algorithm}
V.~Powers and T.~W{\"o}rmann, ``An algorithm for sums of squares of real
  polynomials,'' \emph{Journal of pure and applied algebra}, vol. 127, no.~1,
  pp. 99--104, 1998.

\bibitem{fukuda2001exploiting}
M.~Fukuda, M.~Kojima, K.~Murota, and K.~Nakata, ``Exploiting sparsity in
  semidefinite programming via matrix completion \protect{I}: General
  framework,'' \emph{SIAM J. Control Optim.}, vol.~11, no.~3, pp. 647--674,
  2001.

\bibitem{ye1994nl}
Y.~Ye, M.~J. Todd, and S.~Mizuno, ``An $\mathcal{O}(\sqrt{n}l)$-iteration
  homogeneous and self-dual linear programming algorithm,'' \emph{Math. Oper.
  Res.}, vol.~19, no.~1, pp. 53--67, 1994.

\bibitem{boyd2011distributed}
S.~Boyd, N.~Parikh, E.~Chu, B.~Peleato, and J.~Eckstein, ``Distributed
  optimization and statistical learning via the alternating direction method of
  multipliers,'' \emph{Found. Trends Mach. Learn.}, vol.~3, no.~1, pp. 1--122,
  2011.

\bibitem{boyd2004convex}
S.~Boyd and L.~Vandenberghe, \emph{Convex optimization}.\hskip 1em plus 0.5em
  minus 0.4em\relax Cambridge University Press, 2004.

\bibitem{scherer2006matrix}
C.~W. Scherer and C.~W. Hol, ``Matrix sum-of-squares relaxations for robust
  semi-definite programs,'' \emph{Math. Program.}, vol. 107, no.~1, pp.
  189--211, 2006.

\bibitem{khoshnaw2015model}
S.~H.~A. Khoshnaw, ``Model reductions in biochemical reaction networks,'' Ph.D.
  dissertation, Department of Mathematics, 2015.

\end{thebibliography}

\balance
\appendix

We present here a detailed count of floating-point operations (flops) to support the claims made in Remark~\ref{r:flops}. An accurate analysis that takes sparsity into account is not straightforward, especially given that sparsity is problem-dependent, so we ignore sparsity for simplicity. Following the convention in~\cite[Appendix C]{boyd2004convex}, we then take the cost of an $m\times n$ matrix-vector multiplication to be $2mn$ flops.

    To compare the complexity of our proposed method to that of SCS only the cost of solving the linear system (27) need be considered, since that is the only difference. Our method solves (27) as
    \begin{subequations}
    \begin{align}
    \sigma_1 &= \hat{\omega}_1 + {A}^\tr  \sigma_2,\\
    (I + {A}{A}^\tr ) \sigma_2 &= \hat{\omega}_2-{A} \hat{\omega}_1,
    \label{e:second-block}
    \end{align}
    \end{subequations}
    where $\hat{\omega}_1 \in \mathbb{R}^{t + N^2}$ and $\hat{\omega}_2 \in \mathbb{R}^m$ are given vectors and $A \in \mathbb{R}^{m \times (t + N^2)}$. Computing the right-hand sides for a given $\sigma_2$ cost $2m(t + N^2) + t + N^2 + (2m(t + N^2)+m)=4m(t + N^2) + t + N^2 +m$ flops, to which we have to add the cost of calculating $\sigma_2 = (I + AA^\tr )^{-1} r$ where $r= \hat{\omega}_2 -{A} \hat{\omega}_1$. Taking  advantage of the ``diagonal plus low structure'' in $I + AA^\tr$, we have
    $$
    \begin{aligned}
        (I + AA^\tr )^{-1}  &= (P + A_1A_1^\tr )^{-1} \\
    &= P^{-1} - P^{-1}A_1(I+A_1^\tr P^{-1}A_1)^{-1}A_1^\tr P^{-1},
    \end{aligned}
    $$
    where $P = I + A_2A_2^\tr$ is an $m\times m$ diagonal matrix and $A_1 \in \mathbb{R}^{m \times t}$. The inverse $P^{-1}$ and the Cholesky factorization $(I+A_1^\tr P^{-1}A_1) = LL^\tr$ can be pre-computed, so to find $\sigma_2$ at each iteration of our algorithm we need:
    \begin{itemize}
    	\item $m$ flops to compute $x = P^{-1}r$
    	\item $2mt$ flops to compute $y = A_1^\tr x$
    	\item $2t^2$ flops to solve $LL^\tr z = y$ using forward and backward substitutions
    	\item $2mt + m$ to compute $v = P^{-1}A_1z$
    	\item $m$ flops to compute $\sigma_2 = x - v$.
    \end{itemize}
    In total, therefore the proposed method requires $2t^2 + 4(N^2 + 2t)m + 4m + t + N^2 = \mathcal{O}(t^2 + mN^2+mt)$ at each iteration.	In contrast:
	\begin{itemize}
		\item The method in SCS-direct uses a cached $LDL^\tr$ factorization of (27) and requires $\mathcal{O}\left((m+t+N^2)^2\right)$ flops to carry out forward and backwards substitutions.
		\item The method in SCS-indirect solves~\eqref{e:second-block} above using a conjugate gradient (CG) method, which costs a total of
$2m(2t + 2N^2 + mn_{\rm cg}) + m + t + N^2 = \mathcal{O}(n_{\rm cg}m^2 + mN^2+mt)$
flops (here, $n_{\rm cg}$ denotes the number of CG iterations).
	\end{itemize}

\end{document}